\numberwithin{equation}{section}
\renewcommand*{\backref}[1]{}
\renewcommand*{\backrefalt}[4]{{\tiny%
    (\ifcase #1 Not cited.%
          \or Cited on p.~#2.%
          \else Cited on pp. #2.%
    \fi%
    )}}
\newcommand{\Z}{\mathbb{Z}}
\newcommand{\Q}{\mathbb{Q}}
\newcommand{\R}{\mathbb{R}}
\newcommand{\C}{\mathbb{C}}
\newcommand{\A}{\mathbb{A}}
\renewcommand{\H}{\mathbb{H}}
\renewcommand{\P}{\mathbb{P}}
\newcommand{\Qal}{\overline{\mathbb{Q}}}
\renewcommand{\c}{\mathcal{C}}
\newcommand{\cc}{\mathbf{c}}
\renewcommand{\a}{\alpha}
\renewcommand{\b}{\beta}
\newcommand{\g}{\gamma}
\renewcommand{\d}{\delta}
\newcommand{\f}{\varphi}
\newcommand{\eps}{\varepsilon}
\renewcommand{\l}{\lambda}
\newcommand{\s}{\sigma}
\newcommand{\m}{\mu}
\renewcommand{\r}{\rho}
\renewcommand{\i}{\mathbf{i}}
\newcommand{\pt}[1]{\left( #1 \right)}
\newcommand{\pq}[1]{\left[ #1 \right]}
\newcommand{\pg}[1]{\left\lbrace #1 \right\rbrace}
\newcommand{\abs}[1]{\left| #1 \right|}
\renewcommand{\Im}{\mathrm{Im}}
\renewcommand{\Re}{\mathrm{Re}}
\newcommand{\quotes}[1]{``#1''}
\DeclareMathOperator{\en}{End}
\DeclareMathOperator{\Gal}{Gal}
\DeclareMathOperator{\GL}{GL}
\newtheorem{theorem}{Theorem}[section]
\newtheorem{proposition}[theorem]{Proposition}
\newtheorem{lemma}[theorem]{Lemma}
\theoremstyle{remark}
\newtheorem{remark}[theorem]{Remark}
\theoremstyle{definition}
\theoremstyle{definition}
\newtheorem{definition}[theorem]{Definition}
\author[L. Ferrigno]{Luca Ferrigno}
\address{Dipartimento di Matematica e Fisica, Università degli Studi Roma Tre, Largo S. L. Murialdo 1, 00146 Roma, Italy}
\email{lucaferrigno.math@gmail.com}
\title{Isogeny relations in products of families of elliptic curves}
\begin{document}

\begin{abstract}
Let $E_{\l}$ be the Legendre family of elliptic curves with equation $Y^2=X(X-1)(X-\l)$. Given a curve $\c$, satisfying a condition on the degrees of some of its coordinates and parametrizing $m$ points $P_1, \ldots, P_m \in E_{\l}$ and $n$ points $Q_1, \ldots, Q_n \in E_{\m}$ and assuming that those points are generically linearly independent over the generic endomorphism ring, we prove that there are at most finitely many points $\cc_0$ on $\c$, such that there exists an isogeny $\phi: E_{\m(\cc_0)} \rightarrow E_{\l(\cc_0)}$ and the $m+n$ points $P_1(\cc_0), \ldots, P_m(\cc_0), \phi(Q_1(\cc_0)), \ldots, \phi(Q_n(\cc_0)) \in E_{\l(\cc_0)}$ are linearly dependent over $\en(E_{\l(\cc_0)})$.
\end{abstract}
\maketitle

\tableofcontents

\section{Introduction}
Let $m$ and $n$ be positive integers. Denote by $E_{\l}$ the elliptic curve with Legendre equation
$$Y^2Z=X(X-Z)(X-\l Z)$$ 
and consider this as a family of elliptic curves $E_{\l} \rightarrow Y(2)=\A^1 \setminus \pg{0,1}$. With a slight abuse of notation, we will denote by $E_{\l}^m$ the $m$-fold fibered power $E_{\l}\times_{Y(2)} \ldots \times_{Y(2)} E_{\l}$, which defines another family $E_{\l}^m \rightarrow Y(2)$. In this article we will work with the product 
$$E_{\l}^m \times E_{\m}^n \xrightarrow{\enspace\pi\enspace} Y(2) \times Y(2).$$
Here, $E_{\m} \rightarrow Y(2)$ is the Legendre family with parameter $\m$. 

Take an irreducible curve $\c \subseteq E_{\l}^m \times E_{\m}^n$, defined over a number field $k$, not contained in a fixed fiber. Then, for each point $\cc \in \c$, let $\pi(\cc)=(\l(\cc), \m(\cc)) \in Y(2) \times Y(2)$, where $\l$ and $\m$ are the coordinate functions on $Y(2)^2$. Also, $\cc \in \c$ defines $m$ points $P_1(\cc), \ldots, P_m(\cc)$ on the elliptic curve $E_{\l(\cc)}$ and $n$ points $Q_1(\cc), \ldots, Q_n(\cc)$ on the elliptic curve $E_{\m(\cc)}$. Let $R_1$ and $R_2$ denote the generic endomorphism rings of $E_{\l}$ and $E_{\m}$ when restricted to $\c$, respectively. In general, these are equal to $\Z$, except in the case where one of the elliptic curves is constant on $\c$ and has complex multiplication.  
For example, if $\l = \l_0$ is constant on $\c$ and $E_{\l_0}$ has complex multiplication, then $R_1$ is strictly larger than $\Z$.

We will assume that $E_{\l}$ and $E_{\m}$ are not generically isogenous on $\c$ and that the $P_i$ are linearly independent over $R_1$ and similarly for the $Q_i$. This is of course equivalent to saying that there are no generic non-trivial linear relations between the $P_i$ and the $Q_i$. Another way of rephrasing this is to say that $\c$ is not contained in a proper subgroup scheme of $E_{\l}^m \times E_{\m}^n \rightarrow Y(2) \times Y(2)$, again assuming that $E_{\l}$ and $E_{\m}$ are not generically isogenous on $\c$.

We define the map 
\begin{align*}
J:Y(2) &\longrightarrow Y(1)=\A^1\\
\l &\longmapsto 2^8 \dfrac{(\l^2-\l+1)^3}{\l^2 (\l-1)^2}
\end{align*}
which sends $\l$ to the $j$-invariant of $E_{\l}$. With a slight abuse of notation, we will also denote by $J$ the map $Y(2)^2 \rightarrow Y(1)^2$ obtained by applying $J$ component-wise.

\begin{definition}
Let $C \subseteq \A^{2}$ be an irreducible curve and let $X,Y$ be the coordinate functions on $\A^2$. We say that $C$ is \emph{asymmetric} (see \cite{Hab10}) if $\deg(X\vert_{C})\neq \deg(Y\vert_{C})$. Here, by convention, we set the degree of a constant map to be 0.

If $\c \subseteq E_{\l}^m \times E_{\m}^n \xrightarrow{\enspace\pi\enspace} Y(2) \times Y(2)$ is an irreducible curve, we say that $\c$ is \emph{asymmetric} if the curve $\widetilde{\c}=(J \circ \pi)(\c) \subseteq \A^2 $ is asymmetric.
\end{definition}

We are now ready to state the main result of this article.

\begin{theorem}\label{thm:main_thm_isog}
Let $\c \subseteq E_{\l}^m \times E_{\m}^n$ be an irreducible asymmetric curve defined over $\Qal$ not contained in a fixed fiber, and define $P_i, Q_j$ as above.
Suppose moreover that $E_{\l}$ and $E_{\m}$ are not generically isogenous on $\c$ and that there are no generic non-trivial relations among $P_1, \ldots, P_m$ on $E_{\l}$ and among $Q_1, \ldots, Q_n$ on $E_{\m}$ with coefficients in $R_1$ and $R_2$, respectively. Then, there are at most finitely many $\cc \in \c (\C)$ such that there exist an isogeny $\phi: E_{\m(\cc)} \rightarrow E_{\l(\cc)}$ and $(a_1, \ldots, a_{m+n}) \in \en(E_{\l(\cc)})^{m+n}\setminus \pg{\mathbf{0}}$ with $$a_1 P_1(\cc)+ \ldots +a_m P_m(\cc)+ a_{m+1} \phi \pt{Q_1(\cc)}+ \ldots + a_{m+n} \phi \pt{Q_n(\cc)}=O.$$
\end{theorem}

Notice that this theorem is a special case of the Zilber--Pink Conjecture. 
In combination with the results of \cite{BC16}, \cite{BC17}, \cite{Bar19}, and \cite{HP16}, and including the case in which one factor has complex multiplication and a linear relation holds among the points on the other factor (which will be addressed in future work), it yields a proof of the conjecture for asymmetric curves in $E_{\l}^m \times E_{\m}^n$ defined over $\Qal$.
For an account on the Zilber--Pink conjecture and other problems of Unlikely Intersections, see \cite{Zan12} and \cite{Pil22}.
\begin{remark}
Notice that if $E_{\l}$ and $E_{\m}$ are generically isogenous, then $\widetilde{\c}=Y_0(N)$ (for some $N\geq 1$) which is not asymmetric, since the modular polynomials are symmetric (see Section \ref{sect_isog}), and therefore have equal degree in both variables. Thus, in principle, the assumption that $E_{\l}$ and $E_{\m}$ are not generically isogenous could be removed from the theorem. 
However, in view of the Zilber–Pink conjecture, we expect that the theorem should remain valid even without the asymmetry assumption. For these reasons, in anticipation of a possible generalization of this result beyond the asymmetric setting, we prefer to leave the statement as it is.
\end{remark}

Depending on $\pi(\c) \subseteq Y(2)^2$, we can distinguish three cases: 
\begin{enumerate}[label=(\roman*)]
\item\label{case_1} the coordinate functions $\l, \m$ on $\c$ are both non-constant;
\item\label{case_2} (exactly) one between $\l$ and $\m$ is constant and the associated elliptic curve is not CM;
\item\label{case_3} (exactly) one between $\l$ and $\m$ is constant and the associated elliptic curve is CM.
\end{enumerate}

For each $\cc \in \c(\C)$, let $\r(\cc) \in \C$ be such that $\en(E_{\l(\cc)})\cong\Z\pq{\r(\cc)}$. 

In case \ref{case_1}, by a theorem by André \cite{And98}, there are only finitely many $\cc \in \c(\overline{\Q})$ such that $E_{\l(\cc)}$ and $E_{\m(\cc)}$ both have complex multiplication. So, recalling that isogenous elliptic curves have the same endomorphism algebra, we can discard those finitely many points and assume that $\r=0$ and $\mathbf{a} \in \Z^{m+n}\setminus \pg{\mathbf{0}}$.

Similarly, in case \ref{case_2}, we can assume without loss of generality that $\l=\l_0$ is constant with $E_{\l_0}$ not CM. Therefore, there are no points $\cc \in \c(\overline{\Q})$ such that $E_{\l(\cc)}$ and $E_{\m(\cc)}$ both have complex multiplication, so we can take $\r=0$ and $\mathbf{a} \in \Z^{m+n}\setminus \pg{\mathbf{0}}$ in this case as well.

In case \ref{case_3}, we can assume again that $\l=\l_0$ is constant. However, in this case there are infinitely many points $\cc \in \c(\overline{\Q})$ such that $E_{\l(\cc)}=E_{\l_0}$ and $E_{\m(\cc)}$ are both CM, so we cannot simplify our hypothesis as before. On the other hand, since $\l$ is constant, we can choose $\r$ to be a generator of $\en(E_{\l_0})\cong\Z[\r]$.

Our proof of Theorem \ref{thm:main_thm_isog} follows the general strategy first introduced by Pila and Zannier in \cite{PZ08} and later used, among others, by Masser and Zannier \cite{MZ10, MZ12} and by Barroero and Capuano \cite{BC16, Bar19,  BC17, BC20}. In what follows, we sketch the argument only in case \ref{case_1}; the proofs of cases \ref{case_2} and \ref{case_3} rely on the same strategy, although their implementation requires additional technical refinements.

Since the elliptic curves $E_{\l}$ and $E_{\m}$ are analytically isomorphic to the complex tori $\C/\Lambda_{\tau_1}$ and $\C/\Lambda_{\tau_2}$, where $\Lambda_{\tau}=\Z+\Z\tau$, with $\tau$ in the complex upper half-plane $\H$, we can  consider the elliptic logarithms $z_1 \ldots, z_m$ of $P_1, \ldots, P_m$ and $w_1, \ldots, w_n$ of $Q_1, \ldots, Q_n$ and define a uniformization map 
$$(\tau_1,z_1 \ldots, z_m, \tau_2,w_1, \ldots, w_n) \mapsto (\l, P_1, \ldots, P_m, \m, Q_1, \ldots, Q_n).$$
By work of Peterzil and Starchenko, after restricting to a suitable fundamental domain, this map is definable in the o-minimal structure $\R_{\text{an, exp}}$, so the preimage of $\c$ is a definable surface $S$.

Let $\c'$ be the subset of $\c$ we want to prove to be finite. Then, the points $\cc_0 \in \c'$ correspond to points on $S$ lying on subvarieties defined by equations with integer coefficients. We then use a result by Habegger and Pila, which implies that there are $\ll T^{\eps}$ points of $S$ lying on the subvarieties with coefficients bounded in absolute value by $T$, provided that the $z_i$ and the $w_j$ are algebraically independent over $\C(\tau_1, \tau_2)$. 

We then use a result by Habegger \cite{Hab10} for asymmetric curves\footnote{This is the only step of the proof where we use the assumption on the asymmetry of $\c$, see also Remark \ref{rmk_asym}.}, giving height bounds for $\l(\cc_0)$, $\m(\cc_0)$, the $P_i(\cc_0)$ and the $Q_j(\cc_0)$. 
By a result of Masser \cite{Mas88}, these bounds imply that the coefficients $a_1, \ldots, a_{m+n}$ of the linear relation between  the $m+n$ points 
$$P_1(\cc_0), \ldots, P_m(\cc_0), \phi(Q_1(\cc_0)), \ldots, \phi(Q_n(\cc_0))$$
can be taken to be bounded by a constant times a positive power of $D_0=\pq{k(\l(\cc_0),\m(\cc_0)):k}$. Moreover, all Galois conjugates of $\cc_0$ are still in $\c'$, so that we have at least $D_0$ points on $S$ lying on the subvarieties with coefficients bounded in absolute value by some positive power of $D_0$. Combining this with the previous bound, we get that $D_0$ is bounded and therefore the claim of the theorem, by Northcott's theorem.

\begin{remark}\label{rmk:pti_const}
Let $C \subseteq E_{\l}^m \times E_{\m}^n$ be an irreducible curve not contained in a fixed fiber, not necessarily asymmetric, and define $P_i, Q_j$ and $\tau_1, z_1, \ldots, z_m, \tau_2, w_1, \ldots, w_n$ as above (see also Section \ref{sec_unif} for more details).
Assume also that $E_{\l}$ and $E_{\m}$ are not generically isogenous on $C$ and that there are no generic non-trivial relations among $P_1, \ldots, P_m$ on $E_{\l}$ and among $Q_1, \ldots, Q_n$ on $E_{\m}$ with coefficients in $R_1$ and $R_2$, respectively.

Then, in case \ref{case_1} let $\ell=0$, while in cases \ref{case_2} and \ref{case_3} let $\ell \geq 0$ be the greatest integer such that there are $\widetilde{a}_{i,j} \in \en(E_{\l_0})$ and $\widetilde{P}_j \in E_{\l_0}(\Qal)$, $i=1, \ldots, m$ and $j=1, \ldots, \ell$, such that the vectors $\widetilde{\mathbf{a}}_{j}:=(\widetilde{a}_{1,j}, \ldots, \widetilde{a}_{m,j})$, for $j=1, \ldots, \ell$, are $\en(E_{\l_0})$-linearly independent and
$$\widetilde{a}_{1,j} P_1(\cc) + \ldots + \widetilde{a}_{m,j} P_m(\cc)=\widetilde{P}_j$$
for every $\cc \in C$.
In particular, the assumption that there is no generic non-trivial relation among the $P_i$ and the assumption on the vectors $\widetilde{\mathbf{a}}_{1}, \ldots, \widetilde{\mathbf{a}}_{\ell}$ implies that $\widetilde{P}_1, \ldots, \widetilde{P}_{\ell}$ are $\en(E_{\l_0})$-linearly independent.
Up to reordering the $P_i$, we can then assume that the matrix $(\widetilde{a}_{i,j})_{i,j=1, \ldots, \ell}$ has maximal rank. 
Hence, we can consider the isogeny
\begin{align*}
\Phi: E_{\l_0}^m \times E_{\m}^n &\longrightarrow E_{\l_0}^m \times E_{\m}^n\\
(P_1, \ldots, P_m, Q_1, \ldots, Q_n) &\longmapsto \pt{\sum\limits_{i=1}^m \widetilde{a}_{i,1}P_i, \ldots, \sum\limits_{i=1}^m \widetilde{a}_{i,\ell}P_i, P_{\ell+1}, \ldots, P_m, Q_1, \ldots, Q_n}
\end{align*}
which sends $(\mathbf{P}, \mathbf{Q}) \in C$ to $(\widetilde{P}_1, \ldots, \widetilde{P}_{\ell}, P_{\ell+1}, \ldots, P_m, \mathbf{Q}) \in \Phi(C)$. Note that the maximality of $\ell$ also implies that $P_{\ell+1}, \ldots, P_m$ are generically $\en(E_{\l_0})$-linearly independent modulo constants, i.e.\ there is no relation of the form 
$$a_{\ell+1} P_{\ell+1}(\cc) + \ldots + a_m P_m(\cc) = \widetilde{P}$$
with $a_{\ell+1}, \ldots, a_m \in \en(E_{\l_0})$ not all zero and $\widetilde{P} \in E_{\l_0}(\Qal)$, that holds for every $\cc \in C$.

Notice that if $C$ satisfies the hypotheses of Theorem \ref{thm:main_thm_isog}, then $\Phi(C)$ satisfies them as well, and vice versa. Moreover, as the restriction of $\Phi$ to any fiber is again an isogeny, images and preimages under $\Phi$ of algebraic subgroups of a fiber are again algebraic subgroups. This implies that Theorem \ref{thm:main_thm_isog} holds for $C$ if and only if it holds for $\Phi(C)$.

Therefore, up to applying the isogeny $\Phi$, we will always assume that on the (asymmetric) curve $\c$ that we are considering in Theorem \ref{thm:main_thm_isog}, $P_1, \ldots, P_{\ell}$ are constant and $\en(E_{\l_0})$-linearly independent, as above.
\end{remark}

\begin{remark}
I am grateful to Gabriel Dill for pointing out that cases \ref{case_2} and \ref{case_3} can also be deduced from Theorem 1.2 in \cite{Dil21}. 
Continuing with the notation introduced in the previous remark and using the notation from \cite{Dil21}, take $A_0=E_{\l_0}^{m-\ell+n}$, $\mathcal{A}=E_{\l_0}^{m-\ell} \times E_{\m}^n$ and $\Gamma=(\Gamma_0)^{m-\ell+n}$, where $\Gamma_0$ is the divisible hull of the subgroup of $E_{\l_0}(\Qal)$ generated by $\en(E_{\l_0})\cdot \widetilde{P}_1, \ldots, \en(E_{\l_0})\cdot \widetilde{P}_{\ell}$. 
Then, if $C$ is the projection of $\Phi(\c)$ onto $\mathcal{A}$, $\mathcal{A}_{\Gamma}^{\pq{1}} \cap C$ consists exactly of the points described in Theorem \ref{thm:main_thm_isog} and, by \cite[Theorem 1.2]{Dil21}, we get that either this intersection is finite or that the generic fiber $C_{\xi} \subset C$ is contained in the translate of a proper abelian subvariety of $\mathcal{A}_{\xi}$ by a point in 
$$(\mathcal{A}_{\xi})_{\mathrm{tors}}+ \mathrm{Tr}(\mathcal{A}_{\xi})=E_{\l_0}^{m-\ell}(\Qal) \times (E_{\m}^n)_{\mathrm{tors}}.$$ 
However, the latter means that either $Q_1, \ldots, Q_n$ are generically linearly dependent or that there is a non-trivial linear relation modulo constants involving $P_{\ell+1}, \ldots, P_m$, contradicting our assumptions. 
\end{remark}

In this article we use Vinogradov's $\ll$ notation: for real-valued functions $f(T)$ and $g(T)$, we write $f(T) \ll g(T)$ if there exists a constant $\gamma > 0$ such that $f(T) \leq \gamma g(T)$ for all sufficiently large $T$. When not explicitly stated, the implied constant is either absolute or depends only on $\mathcal{C}$ and other fixed data. We use subscripts to indicate any additional dependence of the implied constant. 

\section{Preliminaries}

\subsection{Isogenies and modular curves}\label{sect_isog}
Let $E_1 \cong \C/\Lambda_1$ and $E_2 \cong \C/\Lambda_2$ be two elliptic curves defined over $\C$. Up to homothety, any lattice in $\C$ is of the form $\Z + \Z\tau$ for some $\tau$ in the upper half-plane $\mathbb{H}$, and any two such lattices define isomorphic complex tori if and only if the corresponding parameters $\tau$ lie in the same orbit under the action of $\mathrm{SL}_2(\Z)$ on $\mathbb{H}$ by fractional linear transformations. Therefore, we may choose $\tau_1, \tau_2 \in \mathbb{H}$ such that $\Lambda_1 = \Z + \Z\tau_1$ and $\Lambda_2 = \Z + \Z\tau_2$, with $\tau_1$ and $\tau_2$ lying in the standard fundamental domain $\mathfrak{F} \subseteq \mathbb{H}$ for the action of $\mathrm{SL}_2(\Z)$. This domain is given by
\begin{equation}\label{eqn:def_fund_dom}
\mathfrak{F}= \pg{\tau \in \mathbb{H}: \abs{\tau}\geq 1, -\frac{1}{2} \leq \Re(\tau) < \frac{1}{2}} \setminus \pg{\tau \in \mathbb{H}: \abs{\tau}= 1, 0 < \Re(\tau) < \frac{1}{2}}.
\end{equation}
In particular, the choice of $\tau_1, \tau_2 \in \mathfrak{F}$ ensures that the associated elliptic curves are uniquely determined up to isomorphism.

Recall that for each isogeny $ \phi:E_1 \rightarrow E_2 $ there exists a unique non-zero complex number $ \a $ such that $ \a \Lambda_1 \subseteq \Lambda_2 $ and $\phi$ corresponds to the multiplication-by-$\a$ map $ \C/\Lambda_1 \rightarrow \C/\Lambda_2$.

Therefore, if $ E_1 $ and $ E_2 $ are isogenous, then there exist $ \a \in \C\setminus\pg{0}$ and integers $ A,B,C,D $  not all zero (not necessarily coprime) such that
\begin{align*}
	\a\cdot \tau_1&= A\tau_2+B\\
	\a\cdot 1&= C\tau_2+D
\end{align*}
thus \[ \tau_1 =\dfrac{A\tau_2+B}{C\tau_2+D}. \]
Moreover, the converse is also true. If $\tau_1, \tau_2 \in \H$ and $\tau_1 =\frac{A\tau_2+B}{C\tau_2+D}$ for integers $A,B,C,D$, then there exists an isogeny $ \phi:E_1\rightarrow E_2$, corresponding to $ \a=C\tau_2+D$.

More generally, we have an action of the group $\GL_2^+(\Q)$ (here $+$ means that the matrices have positive determinant) on the upper half-plane $\H$ which is given by
$$ M\tau=\dfrac{a\tau+b}{c\tau+d}$$ for $M=\begin{psmallmatrix}
a & b \\ 
c & d
\end{psmallmatrix}\in \GL_2^+(\Q)$. If $M \in \textrm{Mat}(\Z,2)$, we say that $M$ is \emph{primitive} if $\gcd(a,b,c,d)=1$. 

We say that an isogeny $\phi$ is \emph{cyclic} if $\ker \phi$ is a (finite) cyclic group. Then it is known (see \cite[Section 1.3]{DS05}) that any isogeny can be written as the composition of a cyclic isogeny and a multiplication-by-$n$ isogeny, for some integer $n$. In particular, cyclic isogenies $E_1 \rightarrow E_2$ correspond to relations $\tau_1 =M\tau_2$ with $M$ primitive. In this case, the degree of the isogeny is equal to $\det M$.

Recall also that the modular polynomials  $\Phi_N(X,Y) \in \Z\pq{X,Y}$ are the irreducible symmetric polynomials parametrizing pairs of isomorphism classes of elliptic curves with a cyclic isogeny of degree $N$ between them \cite[Chapter 5]{Lan87}. In other words, $\Phi_N(j_1,j_2)=0$ if and only if there exists a cyclic isogeny of degree $N$ between the elliptic curves with $j$-invariants $j_1$ and $j_2$. 
We then define the classical modular curve $Y_0(N) \subset \A^2$ as the plane curve defined by the equation $\Phi_N(X,Y)=0$.

Finally, the following result provides an effective bound for the size of the integers $A, B, C, D$ when the degree of the isogeny is fixed. It is a consequence of Theorem 1.1 of \cite{Orr18} (see Section 1.A therein for details), which constitutes an improvement of both Lemma 2.1 of \cite{MZ20b} and Lemma 5.2 of \cite{HP12}.

\begin{lemma}\label{lemma_bound_isogenie_matrix}
There exists an absolute constant $c > 0$ with the following property: if 
$$E_1=\C/(\Z+\Z\tau_1), \qquad E_2=\C/(\Z+\Z\tau_2)$$
are elliptic curves with $\tau_1, \tau_2 \in \mathfrak{F}$, and there exists a cyclic isogeny $ \phi: E_1 \rightarrow E_2 $ of degree $N$, then there are integers $ A,B,C,D $ such that 
\[ AD-BC=N \qquad \tau_1 =\dfrac{A\tau_2+B}{C\tau_2+D} \qquad \abs{A}, \abs{B},\abs{C}, \abs{D}\leq c N. \]
\end{lemma}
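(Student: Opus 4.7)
The plan is to first normalize so that both $\tau_1, \tau_2$ lie in the standard fundamental domain $\mathcal{F}$ for the action of $\mathrm{SL}_2(\Z)$ on $\H$. Since replacing $\tau_i$ by $\gamma_i \tau_i$ for $\gamma_i \in \mathrm{SL}_2(\Z)$ only corresponds to choosing a different basis of $\Lambda_i$ and does not change $E_i$, this reduction is harmless. The cyclic isogeny $\phi$ then corresponds to some primitive integer matrix $M_0$ of determinant $N$ with $\tau_1 = M_0 \tau_2$, and applying Hermite normal form on the left, we decompose $M_0 = \gamma\, T$ where $\gamma \in \mathrm{SL}_2(\Z)$ and $T = \begin{psmallmatrix} a & b \\ 0 & d \end{psmallmatrix}$ with $ad = N$ and $0 \leq b < d$. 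The entries of $T$ are already bounded by $N$, so the problem reduces to bounding the entries of $\gamma$.

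I would then translate the equality $\gamma^{-1}\tau_1 = T\tau_2 = (a\tau_2+b)/d =: \sigma$ into a bound on $\gamma$, using $\tau_1 \in \mathcal{F}$. As a preliminary, from the identity $\Im(M_0\tau_2) = N\,\Im(\tau_2)/|c\tau_2 + d'|^2$ applied with $(c, d')$ the bottom row of $M_0$, together with the analogous statement obtained by using $\mathrm{adj}(M_0)$ in place of an inverse, one deduces from $\Im(\tau_i) \geq \sqrt{3}/2$ and $|\Re(\tau_i)| \leq 1/2$ that both $\Im(\tau_1)$ and $\Im(\tau_2)$ are at most $O(N)$. It follows that $|\sigma| \leq O(N^2)$ while $\Im(\sigma) = (a/d)\,\Im(\tau_2) \geq \sqrt{3}/(2N)$.

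Writing $\gamma^{-1} = \begin{psmallmatrix} s & -q \\ -r & p \end{psmallmatrix}$, the identity $\Im(\sigma) = \Im(\tau_1)/|p - r\tau_1|^2$ yields $|p - r\tau_1|^2 \leq O(N^2)$, which combined with $\Im(\tau_1) \geq \sqrt{3}/2$ bounds $|r| \leq O(N)$ and then $|p| \leq |r||\tau_1| + O(N) \leq O(N^2)$. Solving for $s, q$ by comparing real and imaginary parts in the equation $s\tau_1 - q = \sigma(p - r\tau_1)$ gives analogous polynomial bounds. Multiplying $\gamma$ by $T$ then produces $M_0$ with entries bounded by a fixed power of $N$.

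The main obstacle is the careful bookkeeping of these inequalities to obtain a truly polynomial (rather than exponential) bound in $N$, and to verify that the exponent is indeed a fixed absolute constant. The generous exponent $10$ in the statement is much larger than what this argument produces and leaves comfortable slack for handling edge cases, such as when the ratio $a/d$ is very small or when $\tau_i$ lies near the boundary of $\mathcal{F}$, where one must be slightly more careful with the trigonometric inequalities relating $\Im(\sigma)$ and $\Im(\tau_1)$.
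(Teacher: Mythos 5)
The paper itself gives no argument for this lemma---it is quoted verbatim from \cite{HP12}, Lemma 5.2---so your proposal has to stand on its own. Its overall scheme (normalize $\tau_1,\tau_2$ into the standard fundamental domain $\mathcal{F}$, write the primitive matrix as $M_0=\gamma T$ with $T$ in Hermite normal form, and bound the entries of $\gamma\in\mathrm{SL}_2(\Z)$ via the imaginary-part identities) is indeed the standard route to such a bound, and the exponent it would produce is far below $10$. However, there is a genuine false step in the middle: the claim that $\Im(\tau_1)$ and $\Im(\tau_2)$ are $O(N)$ is simply not true. Take $\tau_2=iY$ and $\tau_1=iNY$ with $Y$ arbitrarily large: both lie in $\mathcal{F}$, they are related by the cyclic degree-$N$ isogeny given by $\begin{psmallmatrix} N & 0\\ 0 & 1\end{psmallmatrix}$, and the imaginary parts are unbounded. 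Consequently the derived bound $\abs{\sigma}\le O(N^2)$ is also false, and the later steps that quietly use $\abs{\tau_1}=O(N)$ (in ``$\abs{p}\le \abs{r}\abs{\tau_1}+O(N)$'') and $\abs{\sigma}=O(N^2)$ (when solving for $s,q$) are unjustified as written. What the two identities you invoke actually yield is not an absolute bound but a two-sided bound on the \emph{ratio}: since the bottom row $(c,d')$ of $M_0$ is a nonzero integer vector and $\tau_2\in\mathcal{F}$, one has $\abs{c\tau_2+d'}^2\ge 3/4$, hence $\Im(\tau_1)\le \tfrac{4N}{3}\Im(\tau_2)$, and the adjugate gives the reverse inequality.

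With that correction the argument can be repaired, but it needs the ratio bound and a case split rather than the absolute bounds. From $\abs{p-r\tau_1}^2=\Im(\tau_1)/\Im(\sigma)=\tfrac{d}{a}\,\Im(\tau_1)/\Im(\tau_2)\le \tfrac{4}{3}N^2$ you get $\abs{r}\le O(N)$ and, using only $\abs{\Re(\tau_1)}\le 1/2$, also $\abs{p}\le O(N)$; symmetrically $\abs{r\sigma+s}^2=\Im(\sigma)/\Im(\tau_1)\le \tfrac43 N^2$ together with $\abs{\Re(\sigma)}\le N/2+1$ bounds $s$, and $ps-qr=1$ then bounds $q$ when $r\neq 0$. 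The case $r=0$ (exactly the regime where $\tau_1$, $\sigma$ may sit arbitrarily high in the cusp, which is what breaks your preliminary claim) must be treated separately: there $p=s=\pm 1$ and $q$ is bounded by comparing real parts, again using $\abs{\Re(\tau_1)}\le 1/2$ and $\abs{\Re(\sigma)}\le N/2+1$. So the strategy is sound and matches the one behind \cite{HP12}, but as written the proof rests on an intermediate statement that is false, and the ``edge case'' you defer to the slack in the exponent ($\tau_i$ near the cusp, i.e. $r=0$) is precisely where the argument needs a separate treatment, not just a larger constant.
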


\subsection{Uniformization}\label{sec_unif}
Let $\mathcal{A}$ be the quasi-projective variety in $Y(2)\times (\P^2)^m\times Y(2) \times (\P^2)^n$ with coordinates 
$$(\l, \pq{X_1:Y_1:Z_1}, \ldots, \pq{X_m:Y_m:Z_m}, \m, \pq{U_1:V_1:W_1}, \ldots, \pq{U_n:V_n:W_n})$$
and defined by the $n+m$ equations
\begin{align*}
Y_i^2Z_i &=X_i(X_i-Z_i)(X_i-\l Z_i) & i=1, \ldots, m\\
V_j^2W_j &=U_j(U_j-W_j)(U_j-\m W_j) & j=1, \ldots, n.
\end{align*}
We set $P_i=\pq{X_i:Y_i:Z_i}$ and $Q_j=\pq{U_j:V_j:W_j}$ and we have an irreducible curve $C \subseteq \mathcal{A}$ defined over a number field $k$ such that the projection of $\mathcal{A}$ to $Y(2)\times Y(2)$ restricts to rational functions $\l$ and $\m$ on $\c$ not both constant.

The aim of this section is to define a uniformization map for $\mathcal{A}$, following closely the exposition in \cite[pp.~2489–2491]{Pil09b}.

As said before, any elliptic curve over $\C$ is analytically isomorphic to a complex torus $\C/\Lambda_{\tau}$, where $\tau$ has positive imaginary part and $\Lambda_{\tau}$ is the lattice generated by $1$ and $\tau$, with fundamental domain
$$\mathcal{L}_{\tau}=\pg{z\in \C: z=x+\tau y, x,y \in \left[0, 1\right)}.$$
The classical Weierstrass $\wp$-function $\wp(z,\Lambda_{\tau})=\wp(z,\tau)$ associated to the lattice $\Lambda_{\tau}$, is $\Lambda_{\tau}$-periodic and satisfies the following differential equation
$$ (\wp(z,\tau)')^2=4\wp(z,\tau)^3-g_2(\tau)\wp(z,\tau)-g_3(\tau) $$ 
where $\wp(z,\tau)'=\frac{d}{dz}\wp(z,\tau)$ and $g_2(\tau), g_3(\tau)$ are defined in \cite[Remark 3.5.1]{Sil09}. Then, the zeros of the polynomial $ 4X^3-g_2(\tau)X-g_3(\tau) $ are exactly the values of $ \wp $ at the half-periods:
$$ e_1(\tau)=\wp\left(\dfrac{1}{2},\tau \right) \quad e_2(\tau)=\wp\left(\dfrac{1+\tau}{2},\tau \right) \quad e_3(\tau)=\wp\left(\dfrac{\tau}{2},\tau \right).$$
Note that the $ e_i(\tau) $ are pairwise distinct (see \cite[Proposition VI.3.6]{Sil09} and \cite[Section 63]{For51}) and that the function $e_3-e_1$ has a regular square root on all of $\mathbb{H}$. Therefore, we can define 
$$ \xi(z,\tau)=\dfrac{\wp(z,\tau)-e_1(\tau)}{e_3(\tau)-e_1(\tau)} \quad \text{ and } \quad  \eta(z,\tau)=\dfrac{\wp(z,\tau)'}{2(e_3(\tau)-e_1(\tau))^{\frac{3}{2}}}$$
so that we have the following relation
$$ \eta(z,\tau)^2=\xi(z,\tau)(\xi(z,\tau)-1)(\xi(z,\tau)-L(\tau)) $$
where 
\begin{equation} \label{def_L}
L(\tau)=\dfrac{e_2(\tau)-e_1(\tau)}{e_3(\tau)-e_1(\tau)}.
\end{equation}
This gives a parametrization of the Legendre family via the map $ (z,\tau) \mapsto \left(L(\tau), P(z,\tau) \right)$, where
$$ P(z,\tau)=\begin{cases}
\left[\xi(z,\tau): \eta(z,\tau):1 \right] &\text{ if } z \not\in \Lambda_{\tau}\\
\left[0:1:0\right] &\text{ otherwise }
\end{cases} $$
Finally, define the map 
$$\f: \mathbb{H}\times \C^m\times \mathbb{H} \times \C^n \rightarrow \mathcal{A}(\C)$$
sending $\pt{\tau_1,z_1,\ldots, z_m, \tau_2, w_1, \ldots, w_n}$ to 
$$\pt{L(\tau_1), P(z_1,\tau_1), \ldots, P(z_m, \tau_1),L(\tau_2), P(w_1, \tau_2), \ldots, P(w_n,\tau_2)}.$$
Since this map is not injective, we would like to find a subset of the domain over which it is possible to define a univalued inverse function of $\f$.

By \cite[Section 70]{For51}, there exists a finite index subgroup $\Gamma$ of $\mathrm{SL}_2(\Z)$ such that $L(\gamma \tau)=L(\tau)$ for every $\gamma \in \Gamma$. Moreover, as a fundamental domain for the action of $\Gamma$ on $\mathbb{H}$ one can take the union of six suitably chosen fundamental domains for the action of $\mathrm{SL}_2(\Z)$ (see \cite[Figures 48 and 49]{For51}). We will call this set $\mathcal{B}$ and define
$$\mathcal{F}_{\mathcal{B}}=\pg{\pt{\tau_1,z_1,\ldots, z_m, \tau_2, w_1, \ldots, w_n}: \tau_1, \tau_2 \in \mathcal{B}, z_1, \ldots, z_m \in \mathcal{L}_{\tau_1}, w_1, \ldots, w_n \in \mathcal{L}_{\tau_2}}.$$

Then, $\f$ has a univalued inverse $\mathcal{A}(\C) \rightarrow \mathcal{F}_{\mathcal{B}}$ and we set
\begin{equation}\label{def:Z_isog}
\mathcal{Z}=\f^{-1}(\c(\C)) \cap \mathcal{F}_{\mathcal{B}}.
\end{equation}
Following Remark \ref{rmk:pti_const}, we assume that $P_1=\widetilde{P}_1, \ldots, P_{\ell}=\widetilde{P}_{\ell}$ are constant on $\c$, so that $\mathcal{Z}$ consists of points of the form $\pt{\tau_1,\widetilde{z}_1,\ldots,\widetilde{z}_{\ell}, z_{\ell+1}, \ldots, z_m, \tau_2, w_1, \ldots, w_n}$, where $\widetilde{z}_i$ is the (constant) elliptic logarithm of the constant point $\widetilde{P}_i$.

Having described the uniformization of $\mathcal{A}$, we now turn to a key result of functional transcendence that will play an essential role in our arguments. 
Let $\widehat{\c}$ be the subset of the smooth points of $\c(\C)$ that are not ramified points of $\pi_{\vert_{\c}}$. In this way, the set $\c(\C) \setminus \widehat{\c}$ consists only of finitely many algebraic points of $\c$.

Fix a point $\cc_{*} \in \widehat{\c}$ and let $D_{\cc_{*}}\subseteq \widehat{\c}$ be a small open disc containing $\cc_{*}$. Let $\mathbf{t}_{*}=\f_{\vert\mathcal{F}_{\mathcal{B}}}^{-1}(\cc_{*})$. Then, there exists an open connected neighbourhood $U_{*} \subseteq \mathbb{H}\times \C^m\times \mathbb{H} \times \C^n$ of $\mathbf{t}_{*}$ such that $\f(U_{*})=D_{\cc_{*}}$.
Thus, it follows that $\tau_1, z_1, \ldots, z_m, \tau_2, w_1, \ldots, w_n$ are well-defined holomorphic functions (possibly constant) on $U_{*}$ and, with a slight abuse of notation, we consider them as holomorphic functions on $D_{\cc_{*}}$.

With these definitions, we have the following transcendence result.
\begin{lemma}\label{lemma:func_transc_isog}
The functions $z_{\ell+1},\ldots,z_m, w_1, \ldots, w_n$ are algebraically independent over $\C(\tau_1, \tau_2)$ on $D_{\cc_{*}}$.
\end{lemma}
\begin{proof}
In case \ref{case_1} we can apply Corollary 2.5 from \cite{BC17} (which is based on a result by Bertrand \cite{Ber09}).

In cases \ref{case_2} and \ref{case_3}, notice that our assumptions on $\c$ imply that 
$$\c \subseteq \pg{\pt{\widetilde{P}_1, \ldots, \widetilde{P}_{\ell}}} \times E_{\l_0}^{m-\ell} \times E_{\m}^n.$$ 
Let also $F=\C\pt{\tau_2}$ (note that $F=\C(\tau_1, \tau_2)$, since $\tau_1$ is constant in these cases) and assume by contradiction that 
$$\mathrm{tr.deg}_F F\pt{z_{\ell+1}, \ldots, z_m, w_1, \ldots, w_n} < m+n-\ell.$$
Then, if $\widetilde{z}_1, \ldots, \widetilde{z}_{\ell} \in \mathcal{L}_{\tau_1}$ are elliptic logarithms of the constant points $\widetilde{P}_1, \ldots, \widetilde{P}_{\ell}$, on $\c$ we must have
$$\mathrm{tr.deg}_F F(\widetilde{z}_1, \ldots, \widetilde{z}_{\ell}, z_{\ell+1}, \ldots, z_m, w_1, \ldots, w_n ) =\mathrm{tr.deg}_F F\pt{z_{\ell+1}, \ldots, z_m, w_1, \ldots, w_n} < m+n-\ell.$$
Applying Theorem 7.1 from \cite{Dil21} to $\c$, we obtain a subvariety $\mathcal{W}\subseteq E_{\l_0}^m\times E_{\m}^n$ containing $\c$. This subvariety is a translate of an abelian subscheme of $E_{\l_0}^m \times E_{\m}^n$ by the product of a constant section of $E_{\l_0}^m$ (defined over $\Qal$) with a torsion multisection of $E_{\m}^n$. Moreover, one has $\dim(\mathcal{W}) \leq m+n-\ell$.

Since $Q_1, \ldots, Q_n$ are linearly independent by hypothesis, this implies that there are $\widetilde{a}_{i,\ell+1} \in \en(E_{\l_0})$ and $\widetilde{P}_{\ell+1} \in E_{\l_0}(\Qal)$, $i=\ell+1, \ldots, m$, such that $\widetilde{a}_{\ell+1,\ell+1}, \ldots, \widetilde{a}_{m,\ell+1}$ are not all zero and
$$\widetilde{a}_{\ell+1,\ell+1} P_{\ell+1} + \ldots + \widetilde{a}_{m,\ell+1} P_m=\widetilde{P}_{\ell+1}$$ 
contradicting the maximality of $\ell$ and proving that
\setlength{\belowdisplayskip}{-1em}
$$\mathrm{tr.deg}_F F\pt{z_{\ell+1}, \ldots, z_m, w_1, \ldots, w_n} = m+n-\ell.$$ 
\end{proof}

\subsection{Heights}
Let $h$ denote the logarithmic absolute Weil height on $\P^N$, as defined in \cite{BG06} and, if $\a$ is an algebraic number, define $h(\a)=h\pt{\pq{1:\a}}$. Define also the multiplicative Weil height as $H(P)=\exp(h(P))$.

For an elliptic curve $E$ defined over $\Qal$ and a point $P \in E(\Qal)\subseteq \P^2(\Qal)$, we also have the Néron-Tate height $\widehat{h}$, defined as follows (see also \cite[VII.9]{Sil09}):
$$\widehat{h}(P)=\lim\limits_{n\rightarrow\infty}\dfrac{1}{4^n}h(2^nP).$$
\begin{proposition}\label{PropHIso}
Let $E_1,E_2$ two elliptic curves defined over $\Qal$ and let $\phi: E_1 \rightarrow E_2$ be an isogeny also defined over $\Qal$. Denote by $\widehat{h}_1$ and $\widehat{h}_2$ the Néron-Tate heights on $E_1$ and $E_2$, respectively. Then for any $P \in E_1(\Qal)$, we have
$$\widehat{h}_2(\phi(P))=\deg \phi \cdot \widehat{h}_1(P).$$
\end{proposition}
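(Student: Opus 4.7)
The plan is to deduce the proposition from the standard functoriality of canonical heights under isogenies; this is a classical result, proved for instance in \cite[Thm.~VIII.5.6]{Sil09}. The argument rests on two ingredients: the functoriality of Weil heights under morphisms, and the description of the pullback of a symmetric divisor under an isogeny.

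The first step is to transfer the Weil height along $\phi$. The Néron--Tate height $\widehat h_i$ on $E_i$ is (up to a universal normalization depending only on the degree of the divisor) the canonical height $\widehat h_{D_i}$ associated to any chosen symmetric divisor $D_i$ on $E_i$. Standard height machinery gives, for any symmetric divisor $D_2$ on $E_2$, the asymptotic identity
\[ h_{D_2}(\phi(P)) = h_{\phi^* D_2}(P) + O(1) \]
for every $P\in E_1(\Qal)$. Because $\phi$ is a group homomorphism it commutes with $[2^n]$, so dividing by $4^n$ and taking the limit — as in the definition of $\widehat h$ recalled just above — upgrades this to the exact equality $\widehat h_{D_2}(\phi(P)) = \widehat h_{\phi^* D_2}(P)$.

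The main geometric step is to identify the divisor class of $\phi^* D_2$ on $E_1$. Since $\phi$ is separable, satisfies $\phi(O_1)=O_2$, and commutes with $[-1]$, the pullback $\phi^* D_2$ is symmetric of degree $\deg(\phi)\cdot\deg(D_2)$. Choosing any symmetric divisor $D_1$ on $E_1$ of the same degree as $D_2$, the difference $\phi^* D_2 - \deg(\phi)\cdot D_1$ is a symmetric class of degree $0$, hence a $2$-torsion element of $\mathrm{Pic}^0(E_1)\cong E_1$ under Abel--Jacobi. Such classes contribute zero to the canonical height, so $\widehat h_{\phi^* D_2} = \deg(\phi)\cdot\widehat h_{D_1}$. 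Combining this with the previous identity and using that $\widehat h_{D_i}$ and $\widehat h_i$ differ only by the same normalizing factor (since $D_1$ and $D_2$ have the same degree), we conclude $\widehat h_2(\phi(P)) = \deg(\phi)\cdot\widehat h_1(P)$.

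The only step requiring any thought is the identification of $\phi^* D_2$ with $\deg(\phi)\cdot D_1$ modulo $\mathrm{Pic}^0[2]$; everything else is an application of general Néron--Tate theory. I do not anticipate any substantial obstacle beyond being precise about the normalization relating $\widehat h_i$ to $\widehat h_{D_i}$.
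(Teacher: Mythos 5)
Your proof is correct and follows essentially the same route as the paper: pull back a symmetric divisor along $\phi$, identify its class with $\deg \phi$ times a divisor of the same degree on $E_1$, and conclude by the functoriality and additivity of canonical heights. The only cosmetic difference is that the paper exhibits the exact linear equivalence $\phi^* 2(O_2)\sim (2\deg\phi)(O_1)$ by summing the kernel points (the sum being $O_1$ or a $2$-torsion point), whereas you work modulo a symmetric degree-zero, hence $2$-torsion, class in $\mathrm{Pic}^0(E_1)$ and note that canonical heights annihilate torsion classes --- the same mechanism in slightly more abstract form (and note that the separability of $\phi$, which you invoke, is automatic in characteristic $0$ and not needed for the degree computation).
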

\begin{proof}
Recall that the height $h$, given by the embedding of an elliptic curve $E$ into $\P^2$, is exactly the height associated to the divisor $3(O)$ as described in \cite[Section B.3]{HS13}, where $O$ is the identity element on $E$. Moreover, if $O_i$ is the identity on $E_i$ and $\phi : E_1 \rightarrow E_2$ is an isogeny, we have 
$$\phi^* 2(O_2)=2\sum\limits_{T \in \ker \phi}(T)\sim 2(T')+(2\deg\phi-2)(O_1)$$
where $T'=\sum\limits_{T \in \ker \phi}T$ is either $O_1$ or a non-trivial 2-torsion point. In either case, $2(T')\sim 2(O_1)$, so that the pull-back $\phi^* 2(O_2)$ is linearly equivalent to $(2\deg \phi)(O_1)$.
Then, using \cite[Theorem B.5.6]{HS13}, we get
\begin{align*}
2 \widehat{h}_2(\phi(P))&=2 \widehat{h}_{E_2, 3(O_2)}(\phi(P))= \widehat{h}_{E_2, 6(O_2)}(\phi(P))\\
	&= 3 \widehat{h}_{E_2, 2(O_2)}(\phi(P))= 3 \widehat{h}_{E_1, \phi^* 2(O_2)}(P)=\\
	&=3 \widehat{h}_{E_1, (2\deg\phi)(O_1)}(P) = 2 \deg \phi \cdot \widehat{h}_{E_1, 3(O_1)}(P) = 2 \deg \phi \cdot \widehat{h}_1(P)
\end{align*}
which is equivalent to $\widehat{h}_2(\phi(P))=\deg \phi \cdot \widehat{h}_1(P)$.
\end{proof}
Using the same notation as in the previous section, we have that if $\cc \in \c\pt{\Qal}$, then standard properties of heights (see \cite[Theorem B.2.5]{HS13}) imply that, if $\l$ and $\m$ are both non-constant, we have 
\begin{equation}\label{eqn:height_bounds_P_lambda}
h(P_i(\cc))\ll h(\l(\cc))+1 \quad \textrm{and} \quad  h(Q_j(\cc))\ll h(\m(\cc))+1
\end{equation}
for every $i=\ell+1,\ldots,m$ and $j=1, \ldots, n$. In cases \ref{case_2} and \ref{case_3}, if $\l=\l_0$ is constant, then $h(P_i(\cc))\ll h(\m(\cc))+1$, as we can use $\m$ as uniformizing parameter on the base $\pi(\c)=\pg{\l_0} \times \A^1$.
Moreover, note that if $\c$ is defined over a number field $k$, we also have 
$$\pq{k(\cc):k}\ll \pq{k(\l(\cc),\m(\cc)):k}.$$
Finally, we will also need another definition of height (from \cite{Pil09}).
\begin{definition}
If $\a$ is a complex number, we define
$$H_1(\a):=\begin{cases}
H(\a)=\max\pg{\abs{p},\abs{q}} \quad \text{ if } \a=\frac{p}{q} \in \Q, \gcd(p,q)=1\\
+\infty \quad \text{otherwise}
\end{cases}$$
For $(\a_1, \ldots, \a_N) \in \C^N$, we also define $H_1(\a_1, \ldots, \a_N)=\max\pg{H_1(\a_i)}$.
\end{definition}

\subsection{Complex Multiplication}
Given a $\l_0 \in Y(2)$ such that $E_{\l_0}$ has complex multiplication, we know that the associated $\tau_0 \in \mathcal{B}$ is an algebraic number of degree 2, with minimal polynomial $aX^2+bX+c \in \Z[X]$ and discriminant $\Delta_0=b^2-4ac <0$. In this case, we know by \cite[Theorem 1, p. 90]{Lan87}, that 
$$\en(E_{\l_0})\cong \Z\pq{\r_0}=:\mathcal{O}_{\l_0}$$
where $\r_0=\frac{\Delta_0+\sqrt{\Delta_0}}{2} \in \C$ and the isomorphism is given by \cite[Proposition II.1.1]{Sil94}. Using this proposition and with a slight abuse of notation, we will identify endomorphisms with the corresponding complex number.

By Theorem II.4.3. of \cite{Sil94},
$$\pq{\Q(j_0):\Q}=\mathrm{cl}(\mathcal{O}_{\l_0})$$
where $j_0$ is the $j$-invariant of $E_{\l_0}$ (which is algebraic by \cite[Proposition II.2.1]{Sil94}) and $\mathrm{cl}(\mathcal{O}_{\l_0})$ is the class number of $\mathcal{O}_{\l_0}$.

Moreover, a theorem of Siegel in the form of Theorem 1.2 of \cite{Bre01} gives us the estimate
$$\abs{\Delta_0}^{\frac{1}{2}-\epsilon}\ll_{\epsilon} \mathrm{cl}(\mathcal{O}_{\l_0}) \ll_{\epsilon} \abs{\Delta_0}^{\frac{1}{2}+\epsilon} $$
so that, in particular, we have $\abs{\Delta_0}\ll \pq{\Q(j_0):\Q}^3$. Finally, using Proposition \ref{PropHIso} and the fact that the endomorphism $\r_0$ has degree $\pt{\Delta_0^2-\Delta_0}/4$, we get that
\begin{equation}\label{diseq_alt_rhoP}
\widehat{h}(\r_0 P)\ll \abs{\Delta_0}^2 \widehat{h}(P) \ll \pq{\Q(j_0):\Q}^6 \widehat{h}(P) \ll \pq{\Q(\l_0):\Q}^6 \widehat{h}(P)
\end{equation}
for every $P \in E_{\l_0}\!\!\pt{\Qal}$.

\section{O-minimality and definable sets}\label{sect:o-min_isog}
In this section we recall the basic properties and some results about o-minimal structures. For more details see \cite{vdDri98} and \cite{vdDM96}.
\begin{definition}
A \emph{structure} is a sequence $\mathcal{S}=\pt{\mathcal{S}_N}$, $N\geq 1$, where each $\mathcal{S}_N$ is a collection of subsets of $\R^N$ such that, for each $N,M\geq 1$:
\begin{itemize}
\item $\mathcal{S}_N$ is a boolean algebra (under the usual set-theoretic operations);
\item $\mathcal{S}_N$ contains every semi-algebraic subset of $\R^N$;
\item if $A \in \mathcal{S}_N$ and $B \in \mathcal{S}_M$, then $A\times B \in \mathcal{S}_{N+M}$;
\item if $A\in \mathcal{S}_{M+N}$, then $\pi(A) \in \mathcal{S}_M$, where $\pi: \R^{M+N} \rightarrow \R^M$ is the projection onto the first $M$ coordinates.
\end{itemize}
If $\mathcal{S}$ is a structure and, in addition, 
\begin{itemize}
\item $\mathcal{S}_1$ consists of all finite unions of open intervals and points
\end{itemize}
then $\mathcal{S}$ is called an \emph{o-minimal structure}.
\end{definition}
Given a structure $\mathcal{S}$, we say that $S \subseteq \R^N$ is a \emph{definable set} if $S \in \mathcal{S}_N$. 

Given $S\subseteq \R^N$ and a function $f: S\rightarrow \R^M$, we say that $f$ is a \emph{definable function} if its graph $\pg{(x,y)\in \R^N\times \R^M: x\in S, y=f(x)}$ is a definable set. One can easily prove that images and preimages of definable sets via definable functions are still definable.

Let $U\subseteq \R^{M+N}$. For $t_0\in \R^M$, we set $U_{t_0}=\pg{x\in \R^N:(t_0,x)\in U}$ and call $U$ a \emph{family} of subsets of $\R^N$, while $U_{t_0}$ is called the \emph{fiber} of $U$ above $t_0$. If $U$ is a definable set, then we call it a \emph{definable family} and it is easy to prove that the fibers $U_{t_0}$ are also definable.

While there are many examples of o-minimal structures (see \cite{vdDM96}), in this article we will work with the structure $\R_{\text{an,exp}}$ (see \cite[Chapter 8]{Pil22} for details about this structure), which was proved to be o-minimal by van den Dries and Miller \cite{vdDM94}.

For a family $Z \subseteq \R^M \times \R^N= \R^{M+N}$ and a positive real number $T$ define
\begin{equation*}
Z^{\sim}(\Q,T):=\pg{(y,z)\in Z: y \in \Q^M, H_1(y)\leq T}
\end{equation*}
where $H_1(y)$ is the 1-polynomial height defined in the previous section and let $ \pi_1, \pi_2 $ be the projections of $ Z $ to the first $ M $ and last $ N $ coordinates, respectively.
\begin{proposition}[\cite{HP16}, Corollary 7.2]\label{prop:thm_HP_isog}
Let $ Z \subseteq \R^{M+N}$ be a definable set. For every $ \varepsilon>0 $ there exists a positive constant $ c=c(Z,\varepsilon) $ with the following property. If $ T\geq 1 $ and $ \lvert \pi_2(Z^{\sim}(\Q,T)) \rvert>cT^{\varepsilon} $, then there exists a continuous definable function $ \delta: \left[0,1 \right] \rightarrow Z  $ such that:
\begin{enumerate}
\item the restriction $\delta \vert_{(0,1)}$ is real analytic (since $\R_{\text{\emph{an, exp}}}$ admits analytic cell decomposition);
\item the composition $ \pi_1\circ \delta: \left[0,1 \right] \rightarrow \R^M$ is semi-algebraic and its restriction to $ (0,1) $ is real analytic;
\item the composition $ \pi_2\circ \delta: \left[0,1 \right] \rightarrow \R^N$ is non-constant.
\end{enumerate}
\end{proposition}

Lastly, we want to prove that the set $\mathcal{Z}$ defined in (\ref{def:Z_isog}) is definable in $\R_{\mathrm{an, exp}}$. In the following, definability will always be considered in $\R_{\mathrm{an, exp}}$, and we say that $X \subseteq \C^N$ is definable if the set 
$$\pg{\pt{\Re(z_1), \Im(z_1), \ldots, \Re(z_N),\Im(z_N)}: (z_1, \ldots, z_N) \in X} \subseteq \R^{2N}$$
is definable. Similarly, a function $f: X \rightarrow \C$ is definable if and only if $\Re(f)$ and $\Im(f)$ are both definable.

Let $\mathfrak{F}$ be the fundamental domain for the action of $\mathrm{SL}_2(\Z)$ on $\mathbb{H}$ defined in Section \ref{sect_isog}; then the restriction of $\wp(z, \tau)$ to $\pg{(z, \tau): \tau \in \mathfrak{F}, z \in \mathcal{L}_{\tau}}$ is definable by work of Peterzil and Starchenko \cite{PS05}. Therefore, $\wp(z, \tau)$ is definable even if restricted to $\pg{(z, \tau): \tau \in \gamma \mathfrak{F}, z \in \mathcal{L}_{\tau}}$, for any fundamental domain  $\gamma \mathfrak{F}$ for $\mathrm{SL}_2(\Z)$. Since $\mathcal{B}$ is the union of six such fundamental domains, we have that $\wp(z, \tau)$ is also definable when restricted to $\pg{(z, \tau): \tau \in \mathcal{B}, z \in \mathcal{L}_{\tau}}$. 
Thus, the uniformization map $\f$, defined in the previous section and restricted to $\mathcal{F}_{\mathcal{B}}$, is definable. Since $\c(\C)$ is semi-algebraic and $\mathcal{F}_{\mathcal{B}}$ is definable, we get that $\mathcal{Z}=\f^{-1}(\c(\C))\cap \mathcal{F}_{\mathcal{B}}$ is definable.

\section{The main estimate}
We continue with the notation established in the previous sections and, for every $T\geq 1$, define the set
\begin{align*}
\mathcal{Z}(T)=\Bigg\lbrace&\pt{\tau_1,\widetilde{z}_1,\ldots,\widetilde{z}_{\ell}, z_{\ell+1}, \ldots, z_m, \tau_2, w_1, \ldots, w_n} \in \mathcal{Z}: \lvert \tau_1\rvert, \lvert \tau_2 \rvert\leq T, \Im(\tau_1)\geq \dfrac{1}{T},\\
& \text{there exist } A,B,C,D \in \Z \cap \pq{-T,T} \text{ with } AD-BC\neq 0, \tau_2=\dfrac{A\tau_1+B}{C\tau_1+D},\\
& \text{there exists } \pt{a_{1},\ldots, a_{m+n}, b_{1}, \ldots, b_{m+n}} \in \Z^{2m+2n} \text{ with }\\ &\pt{a_{\ell+1}+b_{\ell+1} \r, \ldots, a_{m+n}+ b_{m+n} \r}\neq \mathbf{0}, \max \abs{a_i}, \abs{b_i} \leq T \text{ and }\\
& \sum\limits_{i=1}^{\ell}\!(a_i+b_i \r) \widetilde{z}_i + \sum\limits_{i=\ell+1}^{m}\!(a_i+b_i \r) z_i+(C\tau_1+D)\sum\limits_{j=1}^{n}(a_{m+j}+b_{m+j}\r)w_j \in \Z+ \Z\tau_1 \Bigg\rbrace
\end{align*}
where $\mathcal{Z}$ is the set defined in \eqref{def:Z_isog}, $\widetilde{z}_1,\ldots,\widetilde{z}_{\ell}$ are the elliptic logarithms of the constant points $\widetilde{P}_1, \ldots, \widetilde{P}_{\ell}$ and $\r$ is either $0$ in cases \ref{case_1} and \ref{case_2}, or a fixed quadratic integer in case \ref{case_3}.

The goal of this section is to prove the following result.
\begin{proposition}\label{prop:main_estim_isog}
Under the hypotheses of Theorem \ref{thm:main_thm_isog}, for all $\eps>0$, we have $\# \mathcal{Z}(T)\ll_{\eps} T^{\eps}$, for all $T\geq 1$.
\end{proposition}
To prove this, we will apply Proposition \ref{prop:thm_HP_isog} to the definable set $W$ consisting of tuples of the form
\begin{align*}
(&\a_{1}, \ldots, \a_{m+n},\b_{1}, \ldots, \b_{m+n}, A,B,C,D, \xi_1, \xi_2,\\
 &\zeta_1, \theta_1, \widetilde{x}_1, \widetilde{y}_1, \ldots,\widetilde{x}_{\ell}, \widetilde{y}_{\ell}, x_{\ell+1}, y_{\ell+1}, \ldots, x_m, y_m,\zeta_2, \theta_2, u_1, v_1, \ldots, u_n, v_n)
\end{align*}
in $\R^{2m+2n+6} \times \R^{2m+2n+4}$, satisfying the following relations:
$$\pt{\a_{\ell+1}+\b_{\ell+1} \r, \ldots,\a_{m+n}+ \b_{m+n} \r}\neq \mathbf{0} \qquad AD - BC \neq 0  \qquad C(\zeta_1 + \theta_1 \i) + D \neq 0$$
$$\pt{\zeta_1+\theta_1 \i,\widetilde{x}_1+\widetilde{y}_1 \i,\ldots, \widetilde{x}_{\ell}+\widetilde{y}_{\ell} \i, x_{\ell+1}+y_{\ell+1} \i, \ldots, x_m+y_m \i, \zeta_2+\theta_2 \i, u_1+v_1 \i, \ldots, u_n+v_n \i} \in \mathcal{Z}$$\vspace{-0.3cm}
$$\pt{C(\zeta_1+\theta_1 \i)+D}(\zeta_2+\theta_2 \i)=A(\zeta_1+\theta_1 \i)+B$$
\begin{align*}
\sum\limits_{p=1}^{\ell}(\a_p+\b_p \r)(\widetilde{x}_p+\widetilde{y}_p \i) &+ \sum\limits_{q=\ell+1}^{m}(\a_q+\b_q \r)(x_q+y_q \i)\\
&+\pt{C(\zeta_1+\theta_1 \i)+D}\sum\limits_{r=1}^{n}(\a_{m+r}+\b_{m+r}\r)(u_r+v_r \i)
=\xi_1+\xi_2(\zeta_1+\theta_1 \i)
\end{align*}
where $\i$ is the imaginary unit. In particular, we consider for each $ T\geq 1 $
\[ W^{\sim}(\Q,T):=\pg{(\a_{1}, \ldots, v_n)\in W: H_1(\a_{1}, \ldots, \a_{m+n},\b_{1}, \ldots, \b_{m+n}, A,B,C,D, \xi_1, \xi_2)\leq T} \]
where we recall that $H_1(\a_{1}, \ldots, \xi_2)$ is finite if and only if $(\a_{1}, \ldots, \xi_2) \in \Q^{2m+2n+6}$. 

Let $\pi_1, \pi_2$ be the projections on the first $2m+2n+6$ and the last $2m+2n+4$ coordinates, respectively.

\begin{lemma}\label{lemma:bound_ll_Teps_isog}
For every $\eps>0$, $\# \pi_2\pt{W^{\sim}(\Q,T)}\ll_{\eps} T^{\eps}$, for all $T\geq 1$.
\end{lemma}
\begin{proof}
Fix $\eps > 0$ and let $c=c(W,\eps)$ be the constant given by Proposition \ref{prop:thm_HP_isog}. Suppose also $\# \pi_2\pt{W^{\sim}(\Q,T)}>c T^{\eps}$ for some $T\geq 1$. 

Then, by Proposition \ref{prop:thm_HP_isog}, there exists a continuous definable function $ \delta: \left[0,1 \right] \rightarrow W$ such that its restriction to $(0,1)$ is real analytic, $\d_1=\pi_1 \circ \d:\pq{0,1} \rightarrow \R^{2m+2n+6}$ is semi-algebraic and $\d_2=\pi_2 \circ \d:\pq{0,1} \rightarrow \R^{2m+2n+4}$ is non-constant. Thus, there exists an infinite connected $J \subseteq \pq{0,1}$ such that $\d_1(J)$ is contained in an algebraic curve and $\d_2(J)$ has positive dimension.

Consider the coordinates 
\begin{align*}
&\a_{1}, \ldots, \a_{m+n},\b_{1}, \ldots, \b_{m+n}, A,B,C,D, \xi_1, \xi_2,\\
 &\zeta_1, \theta_1, \widetilde{x}_1, \widetilde{y}_1, \ldots,\widetilde{x}_{\ell}, \widetilde{y}_{\ell}, x_{\ell+1}, y_{\ell+1}, \ldots, x_m, y_m,\zeta_2, \theta_2, u_1, v_1, \ldots, u_n, v_n
\end{align*}
as functions on $J$ and define
$$\tau_{i}=\zeta_{i}+ \theta_{i} \i, \qquad \widetilde{z}_{p}=\widetilde{x}_{p}+\widetilde{y}_{p} \i, \qquad z_{q}=x_{q}+y_{q} \i \qquad w_{r}=u_{r}+v_{r} \i$$
with $i=1,2$, $p=1, \ldots, \ell$, $q=\ell+1, \ldots, m$ and $r=1, \ldots n$.

On $J$, the functions $\a_{1}, \ldots, \a_{m+n},\b_{1}, \ldots, \b_{m+n}, A,B,C,D, \xi_1, \xi_2$ satisfy $2m+2n+6-1=2m+2n+5$ independent algebraic relations over $\C$ (because they are functions on an algebraic curve). Since $\pt{\a_{\ell+1} + \b_{\ell+1} \r, \ldots, \a_{m+n} + \b_{m+n} \r}\neq \mathbf{0}$ and by the relations 
$$\sum\limits_{p=1}^{\ell}(\a_p+\b_p \r)\widetilde{z}_p + \sum\limits_{q=\ell+1}^{m}(\a_q+\b_q \r)z_q+\pt{C\tau_1+D}\sum\limits_{r=1}^{n}(\a_{m+r}+\b_{m+r}\r)w_r=\xi_1+\xi_2\tau_1 $$
$$\pt{C\tau_1+D}\tau_2=A\tau_1+B$$ 
it follows that the $2m+2n+6+(m-\ell)+n=3m+3n+6- \ell$ functions 
$$\a_{1}, \ldots, \a_{m+n},\b_{1}, \ldots, \b_{m+n}, A,B,C,D, \xi_1, \xi_2, z_{\ell+1}, \ldots, z_m, w_1, \ldots, w_n$$ satisfy $2m+2n+5+2=2m+2n+7$ independent algebraic relations over $F=\C\pt{\tau_1, \tau_2}$. Finally, let
$$\mathcal{W}=(\tau_1(J), \widetilde{z}_1,\ldots,\widetilde{z}_{\ell}, z_{\ell+1}(J), \ldots , z_m(J), \tau_2(J), w_1(J), \ldots, w_n(J)) \subseteq \mathcal{Z},$$
which has positive dimension since $\d_2(J)$ has positive dimension, and consider $\tau_1, z_{\ell+1},$ $\ldots , z_m,$ $\tau_2, w_1, \ldots, w_n$ as holomorphic functions on $\f(\mathcal{W}) \subseteq \c(\C)$. The algebraic relations found above can be analytically continued to an open disc $D$ in $\f(\mathcal{W}) \cap \widehat{\c}$. Therefore, 
$$\mathrm{trdeg}_F F\pt{z_{\ell+1}, \ldots, z_m, w_1, \ldots, w_n} \leq 3m+3n+6 - \ell-(2m+2n+7)=m+n-\ell-1$$
which implies that $z_{\ell+1}, \ldots, z_m, w_1, \ldots, w_n$ are algebraically dependent over $F$ on $D$ and thus, by Lemma \ref{lemma:func_transc_isog}, we get a contradiction, proving the proposition.
\end{proof}

\begin{proof}[Proof of Proposition \ref{prop:main_estim_isog}]
If $(\tau_1,\widetilde{z}_1,\ldots,\widetilde{z}_{\ell}, z_{\ell+1}, \ldots, z_m, \tau_2, w_1, \ldots, w_n) \in \mathcal{Z}(T)$, then there are integers $ a_1, \ldots, a_{m+n},b_1, \ldots, b_{m+n}$, $A,B,C,D $ with absolute value bounded by $T$ and integers $\xi_1, \xi_2$ such that 
\[ \pt{\a_{\ell+1}+\b_{\ell+1} \r, \ldots,\a_{m+n}+ \b_{m+n} \r}\neq \mathbf{0} \qquad AD - BC \neq 0  \qquad C\tau_1 + D \neq 0 \]
\[ \pt{C\tau_1+D}\tau_2=A\tau_1+B \]
\[ \sum\limits_{i=1}^{\ell}(a_i+b_i \r)\widetilde{z}_i + \sum\limits_{i=\ell+1}^{m}(a_i+b_i \r)z_i+\pt{C\tau_1+D}\sum\limits_{j=1}^{n}(a_{m+j}+b_{m+j}\r)w_j=\xi_1+\xi_2 \tau_1\]
And since $ \lvert \tau_1 \rvert, \lvert \tau_2 \rvert, \lvert A \rvert, \lvert B \rvert, \lvert C \rvert, \lvert D \rvert, \lvert a_{1} \rvert, \ldots,  \lvert a_{m+n} \rvert, \abs{b_{1}}, \ldots, \abs{b_{m+n}}\leq T$ and $\widetilde{z}_p, z_{q} \in \mathcal{L}_{\tau_1}$, $w_r \in \mathcal{L}_{\tau_2}$ we have that
\begin{align*}
&\left| \sum\limits_{p=1}^{\ell}(a_p+b_p \r)\widetilde{z}_p + \sum\limits_{q=\ell+1}^{m} (a_q +b_q \r)z_q + (C\tau_1+D)\sum\limits_{r=1}^{n} (a_{m+r}+b_{m+r}\r) w_r \right|\\
 &\leq \sum\limits_{p=1}^{\ell} \pt{\lvert a_p \rvert+ \abs{b_p} \abs{\r}}  \lvert \widetilde{z}_p \rvert + \sum\limits_{q=\ell+1}^{m} \pt{\lvert a_q \rvert+ \abs{b_q} \abs{\r}}  \lvert z_q \rvert + \lvert C\tau_1+D \rvert \sum\limits_{r=1}^{n} \pt{\lvert a_{m+r} \rvert +\abs{b_{m+r}}\abs{\r}} \lvert w_r \rvert \\
&\ll T \cdot \max \pg{1, \lvert \tau_1 \rvert}+ (T\cdot \max\pg{1, \abs{\tau_1}}) \cdot T \cdot \max \pg{1, \lvert \tau_2 \rvert} \ll T^4
\end{align*}
Therefore, we have 
$$\abs{\xi_1+ \xi_2\tau_1}=\abs{\sum\limits_{p=1}^{\ell}(a_p+b_p \r)\widetilde{z}_p + \sum\limits_{q=\ell+1}^{m} (a_q +b_q \r)z_q + (C\tau_1+D)\sum\limits_{r=1}^{n} (a_{m+r}+b_{m+r}\r) w_r}\ll T^4$$
from which we deduce that $\abs{\xi_2}\ll T^5$, since $\Im(\tau_1)\geq \frac{1}{T}$ and
$$T^4 \gg \abs{\xi_1+ \xi_2\tau_1} \geq \abs{\Im(\xi_1+ \xi_2\tau_1)}=\abs{\xi_2 \Im(\tau_1)}\geq \frac{\abs{\xi_2}}{T}.$$
It follows that $$\abs{\xi_1}=\abs{\xi_1+\xi_2\tau_1-\xi_2\tau_1}\leq \abs{\xi_1+\xi_2\tau_1}+\abs{\xi_2}\cdot \abs{\tau_1} \ll T^4+ T^5 \cdot T \ll T^6.$$
Hence we get that 
\begin{align*}
& \left( a_1, \ldots, a_{m+n},b_1, \ldots, b_{m+n}, A,B,C,D, \xi_1, \xi_2,\right. \\
& \left. \Re(\tau_1), \Im(\tau_1), \Re(\widetilde{z}_1), \Im(\widetilde{z}_1), \ldots, \Re(\widetilde{z}_{\ell}), \Im(\widetilde{z}_{\ell}), \Re(z_{\ell+1}), \Im(z_{\ell+1}), \ldots, \Re(z_m), \Im(z_m), \right.\\  
& \left. \Re(\tau_2), \Im(\tau_2), \Re(w_1), \Im(w_1), \ldots, \Re(w_n), \Im(w_n)\right) \in W^{\sim} (\Q, \nu T^6)
\end{align*} 
for some positive constant $\nu$.  
Finally, consider the map 
$$\setlength{\arraycolsep}{0pt}
\renewcommand{\arraystretch}{1.2}
  \begin{array}{ r c l }
    \mathcal{Z}(T) & {} \longrightarrow {} & \pi_2\pt{W^{\sim} (\Q, \nu T^6)}\\
     (\tau_1, \ldots, w_n)      & {} \longmapsto {} & \pt{\Re(\tau_1), \Im(\tau_1), \ldots, \Re(w_n), \Im(w_n)}.
  \end{array}$$
Since this map is injective, the conclusion follows from Lemma~\ref{lemma:bound_ll_Teps_isog}.
\end{proof}

\section{Arithmetic bounds}
Let $\c$ be as in Theorem \ref{thm:main_thm_isog} and let $\c'$ be the set of points $\cc \in \widehat{\c}(\C)$ such that there exists an isogeny $\phi_{\cc}:E_{\m(\cc)}\rightarrow E_{\l(\cc)}$ and $\mathbf{a}, \mathbf{b} \in \Z^{m+n}$ with $(a_{\ell+1}+b_{\ell+1} \r, \ldots, a_{m+n}+ b_{m+n}\r)\neq \mathbf{0}$ and
$$\sum\limits_{p=1}^{\ell} (a_p +b_p \r)\widetilde{P}_p + \sum\limits_{q=\ell+1}^{m} (a_q +b_q \r)P_q(\cc) + \sum\limits_{r=1}^{n} (a_{m+r}+b_{m+r}\r) \phi_{\cc}(Q_r(\cc))=O$$
where $\r$ is 0 in cases \ref{case_1} and \ref{case_2}, and a fixed generator for $\en(E_{\l_0})$ in case \ref{case_3}. Moreover, we can also assume that $\phi_{\cc}$ is a cyclic isogeny.

Since $\c$ is defined over $\Qal$, the curve $\widetilde{\c}=(J \circ \pi)(\c)$ is also defined over $\Qal$ and thus, for every $\cc \in \c'$, 
$$\pt{J(\l(\cc)),J(\m(\cc))} \in \widetilde{\c} \cap \bigcup_{N\geq 1} Y_0(N).$$
As all the modular curves $Y_0(N)$ are defined over $\Q$, all the points $\pt{J(\l(\cc)),J(\m(\cc))}$ are algebraic, which implies that also $\l(\cc)$ and $\m(\cc)$ are algebraic for every $\cc \in \c'$. From this, it follows that $\c'$ is a subset of $\c(\Qal)$ and thus we can define, for every $\cc_0 \in \c'$, $D_0:=\pq{k(\l(\cc_0),\m(\cc_0)):k}$, where $k$ is the field of definition of $\c$. 

All the constants appearing in this section depend only on $\c$, the field $k$ and on the integers $m, n$, unless otherwise stated.

\begin{lemma}\label{lemma_bounds}
Let $\cc_0 \in \c'$ and let $N_0$ be the minimal degree of an isogeny $\phi_{\cc_0}: E_{\m(\cc_0)} \rightarrow E_{\l(\cc_0)}$. Then, for every $\eps>0$, there exist positive constants $\g_1, \g_2$ (depending on $\eps$) such that
$$h(\l(\cc_0)),\; h(\m(\cc_0)) \leq \g_1 D_0^{\eps} \quad \text{ and } \quad N_0 \leq \g_2 D_0^{2+\eps}.$$
\end{lemma}
\begin{proof}
Fix $\cc_0 \in \c'$, and let $N_0$ be as above. Note that an isogeny of minimal degree between two elliptic curves is necessarily cyclic, since otherwise it would factor as the composition of a cyclic isogeny and a multiplication-by-$n$ map.

Therefore, $(J(\l(\cc_0)), J(\m(\cc_0))) \in (\widetilde{\c} \cap Y_0(N_0))(\Qal)$. 
Since $\widetilde{\c}$ is asymmetric, we may apply \cite[Theorem~1.1]{Hab10} and standard properties of heights to deduce
$$h(\l(\cc_0)),\; h(\m(\cc_0)) \ll \log(1+N_0).$$
Next, by Théorème 1.4 of \cite{GR14}, we have
$$N_0 \ll D_0^2 \cdot \max\pg{h_{F}(E_{\l(\cc_0)}), \log(D_0),1}^2,$$
where $h_{F}(E_{\l(\cc_0)})$ is the (stable) Faltings height of $E_{\l(\cc_0)}$. By Proposition 2.1 of \cite{Sil86}, one has
$$h_F(E_{\l(\cc_0)}) \ll h\pt{j(E_{\l(\cc_0)})}+1.$$
Since $j(E_{\l(\cc_0)})$ is a rational function in $\l(\cc_0)$, Proposition 3.2 of \cite{Zan14} gives
$$h\pt{j(E_{\l(\cc_0)})}+1 \ll h(\l(\cc_0))+1 \ll \log(1+N_0) \ll_{\epsilon_1} N_0^{\epsilon_1}$$
for every $\epsilon_1 >0$.
Moreover, for every $\epsilon_2 >0$ we have $\log D_0 \ll_{\epsilon_2} D_0^{\epsilon_2}$. Hence
$$N_0 \ll D_0^2 \cdot \max\pg{h_{F}(E_{\l(\cc_0)}), \log(D_0),1}^2\ll_{\epsilon_1, \epsilon_2} D_0^{2+2\epsilon_2} \cdot N_0^{2\epsilon_1}.$$
Now fix $\eps>0$. Choosing $\epsilon_1 = \frac{\eps}{8+4\eps}$ and $\epsilon_2 = \frac{\eps}{4}$ yields
$$N_0 \ll_{\eps} D_0^{2+\eps}.$$
Finally, recalling that $h(\l(\cc_0)), h(\m(\cc_0)) \ll \log(1+N_0) \ll_\epsilon N_0^\epsilon$, we conclude
$$h(\l(\cc_0)),\; h(\m(\cc_0)) \ll_\eps D_0^\eps$$
for every $\eps>0$.
\end{proof}

\begin{remark}\label{rmk_asym}
Note that this lemma above is the only part of the proof where we need to use the hypothesis that $\c$ is asymmetric, while all the other steps are true also for non-asymmetric curves. Thus, if one was able to prove this lemma for an arbitrary $\c$ or any of the Conjectures 21.20, 21.23 or 21.24 from \cite{Pil22}, then Theorem \ref{thm:main_thm_isog} would follow for any $\c$.
\end{remark}

\begin{lemma}
Let $\cc_0 \in \c'$. Then, there exist positive constants $\g_3, \g_4, \g_5$ such that
$$\widehat{h}(P_q(\cc_0))\leq \g_3 D_0 \quad \text{for every } q={\ell+1}, \ldots, m$$
$$\widehat{h}(\phi_{\cc_0}(Q_r(\cc_0)))\leq \g_4 D_0^{4} \quad \text{for every } r=1, \ldots, n.$$
Moreover, the $P_q(\cc_0)$ and the $\phi_{\cc_0}(Q_r(\cc_0))$ are defined over a field $K\supseteq k(\l(\cc_0),\m(\cc_0))$ with $\pq{K:\Q}\leq \g_5 D_0^{2}$.
\end{lemma}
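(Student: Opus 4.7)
The proof combines the output of Lemma~\ref{lemma_bounds} with standard height machinery, and I would proceed in three stages.

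For the first bound I would start from Lemma~\ref{lemma_bounds}, which gives $h(\l(\cc_0)),\,h(\m(\cc_0)) \ll D_0$. Using the estimates $h(P_j(\cc_0)) \ll h(\l(\cc_0))+1$ and $h(Q_j(\cc_0)) \ll h(\m(\cc_0))+1$ recalled in the Heights subsection (together with their analogues in cases (ii) and (iii) where one of $\l,\m$ is constant), one obtains Weil-height bounds $h(P_j(\cc_0)),\,h(Q_j(\cc_0)) \ll D_0$. The classical comparison between the naive Weil height and the N\'eron--Tate height on $E_{\l(\cc_0)}$ (respectively $E_{\m(\cc_0)}$) has an error term bounded by a multiple of $h(E_{\l(\cc_0)}) + 1 \ll h(\l(\cc_0)) + 1 \ll D_0$, so $\widehat{h}(P_j(\cc_0)),\,\widehat{h}(Q_j(\cc_0)) \ll D_0$ follows, supplying the constant $\g_2$.

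For the second bound I would apply Proposition~\ref{PropHIso} to the cyclic isogeny $\phi_{\cc_0}$, whose degree $N_0$ satisfies $N_0 \ll D_0^{8}$ (a bound already established inside the proof of Lemma~\ref{lemma_bounds}). This gives
$$\widehat{h}\pt{\phi_{\cc_0}(Q_j(\cc_0))} = N_0\cdot \widehat{h}\pt{Q_j(\cc_0)} \ll D_0^{8}\cdot D_0 = D_0^{9},$$
yielding the constant $\g_3$.

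For the third bound, the points $P_j(\cc_0)$ and $Q_j(\cc_0)$ are all rational over $k(\cc_0)$, and the inequality $\pq{k(\cc_0):k} \ll D_0$ recalled in the Heights subsection handles their contribution directly. The delicate step is controlling the field of definition of each $\phi_{\cc_0}(Q_j(\cc_0))$, which I would do by bounding the field of definition of the cyclic isogeny $\phi_{\cc_0}$ itself. Since $\phi_{\cc_0}$ is pinned down, up to an automorphism of $E_{\l(\cc_0)}$ of order bounded absolutely, by the choice of a generator of $\ker\phi_{\cc_0}$, and such a generator is a torsion point in $E_{\m(\cc_0)}[N_0]$, it is defined over an extension of $K_0:=k(\l(\cc_0),\m(\cc_0))$ of degree at most $\#E_{\m(\cc_0)}[N_0]=N_0^{2}$. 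Hence $\phi_{\cc_0}$, and therefore all of its values $\phi_{\cc_0}(Q_j(\cc_0))$, are defined over an extension of $K_0$ of degree $\ll N_0^{2}$. Taking $K$ to be the compositum of $k(\cc_0)$ with this extension then gives
$$\pq{K:\Q} \ll \pq{k:\Q}\cdot D_0\cdot N_0^{2} \ll D_0\cdot D_0^{16} = D_0^{17},$$
absorbing $\pq{k:\Q}$ and $\#\mathrm{Aut}(E_{\l(\cc_0)})$ into the constant $\g_4$.

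The main obstacle is this last stage: the exponent $17 = 1 + 2\cdot 8$ arises from the interplay $\pq{k(\cc_0):k}\cdot N_0^{2}$ and so requires bounding the field of definition of $\phi_{\cc_0}$ by $N_0^{2}$ via a single generator of $\ker\phi_{\cc_0}$, rather than settling for the far cruder bound coming from the full $N_0$-torsion Galois representation, which would give a substantially worse exponent.
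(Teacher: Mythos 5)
Your proposal is correct and follows essentially the same route as the paper: Zimmer-type comparison of $\widehat{h}$ with the Weil height plus Lemma \ref{lemma_bounds} for the first bound, Proposition \ref{PropHIso} with $N_0 \ll D_0^8$ for the second, and a compositum of $k(\cc_0)$ with a degree-$\ll N_0^2$ field of definition of the cyclic isogeny for the third. The paper simply cites Lemma 6.1 of \cite{BC16} for the field of definition of the points and V\'elu's formulas for that of $\phi_{\cc_0}$, whereas you unpack the latter via a generator of $\ker\phi_{\cc_0}$ inside $E_{\m(\cc_0)}[N_0]$ and a bounded-degree isomorphism onto $E_{\l(\cc_0)}$, which is the standard argument underlying that citation.
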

\begin{proof}
We use the same notation as in the previous proof.

Using work of Zimmer \cite[Theorem]{Zim76}, the previous lemma (with $\eps=1$) and the bounds (\ref{eqn:height_bounds_P_lambda}), in case \ref{case_1} we have 
$$\widehat{h}(P_p(\cc_0))\leq h(P_p(\cc_0))+\g_6\pt{h(\l(\cc_0))+1}\leq \g_7\pt{h(\l(\cc_0))+1} \leq \g_3 D_0$$
while in cases \ref{case_2} and \ref{case_3} we get the same estimate by
$$\widehat{h}(P_p(\cc_0))\leq h(P_p(\cc_0))+\g_6\pt{h(\m(\cc_0))+1}\leq \g_7\pt{h(\m(\cc_0))+1} \leq \g_3 D_0.$$
Similarly, $\widehat{h}(Q_r(\cc_0))\leq \g_8 D_0$. So, by Proposition \ref{PropHIso} and Lemma \ref{lemma_bounds} (again with $\eps=1$), we get that 
$$\widehat{h}(\phi_{\cc_0}(Q_r(\cc_0)))\leq \g_8 N_0 D_0 \leq \g_4 D_0^{4}$$
Lemma 7.2 in \cite{BC16} implies that the $P_p(\cc_0)$ and the $Q_r(\cc_0)$ are defined over a field $K_1$ of degree at most $\g_{9} D_0$ over $\Q$. Moreover, by \cite[Lemma 6.1]{MW90}, $\phi_{\cc_0}$ is defined over a field $K_2$ of degree at most $12$ over $k(\l(\cc_0),\m(\cc_0))$, and thus $[K_2: \Q] \leq 12 D_0$. 
Therefore, the points $\phi_{\cc_0}(Q_r(\cc_0))$ are defined over the compositum $K_1 K_2$ which has degree $\ll D_0^{2}$ over $\Q$.
\end{proof}

Next, we show that for any $\cc_0 \in \c'$ we can choose \quotes{small} coefficients for the linear relation. 

\begin{lemma}\label{lemma_coeff}
For any $\cc_0 \in \c'$, there exist $\mathbf{a}, \mathbf{b} \in \Z^{m+n}$ with $(a_{\ell+1}+b_{\ell+1} \r, \ldots, a_{m+n}+ b_{m+n}\r)\neq \mathbf{0}$ and
$$\sum\limits_{p=1}^{\ell} (a_p +b_p \r)\widetilde{P}_p + \sum\limits_{q=\ell+1}^{m} (a_q +b_q \r)P_q(\cc_0) + \sum\limits_{r=1}^{n} (a_{m+r}+b_{m+r}\r) \phi_{\cc_0}(Q_r(\cc_0))=O$$
and such that $\max \pg{\abs{a_i}, \abs{b_i}} \leq \g_{10} D_0^{\eta_1}$ for some positive constants $\g_{10}, \eta_1$.
\end{lemma}
\begin{proof}
For cases \ref{case_1} and \ref{case_2}, we already saw that we can take $\r=0$ and therefore we can choose $\mathbf{b}=\mathbf{0}$. So we can apply Lemma 6.1 of \cite{BC16} (which is in turn based on a result by Masser \cite{Mas88}), to the points $\widetilde{P}_p, P_q(\cc_0)$ and $\phi_{\cc_0}(Q_r(\cc_0))$, using also Lemma \ref{lemma_bounds} and the height bounds from the previous lemma.

In case \ref{case_3}, we use again the above-mentioned lemma by Barroero and Capuano, this time with the points $\widetilde{P}_p, \r \widetilde{P}_p, P_q(\cc_0)$, $\r P_q(\cc_0)$, $\phi_{\cc_0}(Q_r(\cc_0))$ and $\r \phi_{\cc_0}(Q_r(\cc_0))$, recalling that by \eqref{diseq_alt_rhoP}, we have that
$$\widehat{h}(\r P_q(\cc_0))\ll D_0^6 \cdot \widehat{h}(P_q(\cc_0))\ll D_0^{7}$$
$$\widehat{h}(\r \phi_{\cc_0}(Q_r(\cc_0)))\ll D_0^6 \cdot \widehat{h}(\phi_{\cc_0}(Q_r(\cc_0)))\ll D_0^{10}.$$
A priori, applying \cite[Lemma 6.1]{BC16} in cases \ref{case_2} and \ref{case_3} gives $\mathbf{a}, \mathbf{b} \in \Z^{m+n}$ with $(a_{1}+b_{1} \r, \ldots, a_{m+n}+ b_{m+n}\r)\neq \mathbf{0}$, but we claim that we cannot have $(a_{\ell+1}+b_{\ell+1} \r, \ldots, a_{m+n}+ b_{m+n}\r)= \mathbf{0}$, otherwise this would mean that the constant points $\widetilde{P}_1, \ldots, \widetilde{P}_{\ell}$ are $\en(E_{\l_0})$-linearly dependent, contradicting our assumptions.
\end{proof}

For the next lemma, let $\tau_1(\cc)=\tau_1(\f^{-1}_{\vert_{\mathcal{F}_{\mathcal{B}}}}(\cc)) \in \mathcal{B}$ for every $\cc \in \c(\C)$ and similarly for $\tau_2(\cc)$, where $\f$ is the uniformization map defined in Section \ref{sec_unif}.

\begin{lemma}\label{lemma_tau}
There exist positive constants $\g_{11}, \g_{12}$ such that for every $\cc_0 \in \c'$ we have
$$\lvert \tau_1(\cc_0) \rvert, \lvert \tau_2(\cc_0) \rvert \leq \g_{11} D_0^{2} \quad \text{ and } \quad \Im(\tau_1(\cc_0)), \Im(\tau_2(\cc_0)) \geq \g_{12} \dfrac{1}{D_0^4}.$$
\end{lemma}
\begin{proof}
Let $\mathfrak{F}$ be the usual fundamental domain for the action of $\mathrm{SL}_2(\Z)$ on $\mathbb{H}$ (defined in (\ref{eqn:def_fund_dom})) and let $\tau \in \mathfrak{F}$. Then, Lemma 1 in \cite{BMZ13} implies that $e^{2 \pi \Im(\tau)} \leq 2079+\abs{j(\tau)}$. Hence, if $\abs{j(\tau)}\leq 2$, then $\Im(\tau)\leq \frac{1}{2\pi}\log(2081)=\g_{13}$. Equivalently, for every $\tau \in \mathfrak{F}$ such that $\Im(\tau)>\g_{13}$, we have $\abs{j(\tau)}>2$.
So, if $\Im(\tau)> \g_{13}$, we then get that 
$$\Im(\tau)\leq \dfrac{1}{2\pi}\log\pt{2079+\abs{j(\tau)}}\leq \dfrac{\log(2081)}{2\pi \log(2)}\log\abs{j(\tau)}.$$
Therefore, for every $\tau \in \mathfrak{F}$ we have $\Im(\tau) \ll \max\pg{1, \log\abs{j(\tau)}}$.

Now, assume that $\tau_1(\cc_0)=M \cdot \tau_1'$, for some $\tau_1' \in \mathfrak{F}$ and $M=\begin{psmallmatrix}
a & b \\ 
c & d
\end{psmallmatrix}\in \mathrm{SL}_2(\Z)$. Then,
$$\Im(\tau_1(\cc_0))=\Im\!\pt{\dfrac{a \tau_1'+b}{c \tau_1'+d}}=\dfrac{\Im(\tau_1')}{\abs{c\tau_1'+d}^2}\leq \dfrac{\Im(\tau_1')}{c^2-cd+d^2}\ll_{\,\!{}_{M}} \max\pg{1, \log\abs{j(\tau_1')}}.$$
As the $j$-function is invariant under the action of $\mathrm{SL}_2(\Z)$, we have that 
$$j(\tau_1(\cc_0))=j(M \cdot \tau_1') =j(\tau_1'),$$
so that $\Im(\tau_1(\cc_0))\ll_{\,\!{}_{M}} \max\pg{1, \log\abs{j(\tau_1(\cc_0))}}$.

Furthermore, we have that $j(\tau_1(\cc_0))=J(L(\tau_1(\cc_0)))=J(\l(\cc_0))$, where $J(\l)=2^8 \frac{(\l^2-\l+1)^3}{\l^2 (\l-1)^2}$ and $L$ was defined in (\ref{def_L}). Since $\l(\cc_0) \in \Qal\setminus\pg{0,1}$, this implies that $j(\tau_1(\cc_0)) \in \Qal$.

Then, using the inequality $\log \abs{\a} \leq \pq{\Q(\a):\Q} h(\a)$ for every non-zero $\a \in \Qal$, we get
\begin{align*}
\log\abs{j(\tau_1(\cc_0))}&\leq \pq{\Q(j(\tau_1(\cc_0))):\Q} h(j(\tau_1(\cc_0)))=\pq{\Q(J(\l(\cc_0))):\Q} h(J(\l(\cc_0)))\\
&\ll \pq{\Q(\l(\cc_0)):\Q} (h(\l(\cc_0))+1) \ll D_0^{2} 
\end{align*}
by Lemma \ref{lemma_bounds} and \cite[Proposition 3.2]{Zan14}.
Combining this with the previous bound gives $\Im(\tau_1(\cc_0))\leq \g_{14}(M) D_0^{2}$, for some positive constant $\g_{14}(M)$ depending on $M$. Since $\mathcal{B}$ is the union of finitely many translates of $\mathfrak{F}$ by elements of $\mathrm{SL}_2(\Z)$, there are only finitely many such $M$ to consider. Thus, we have that $\Im(\tau_1(\cc_0))\leq \g_{15} D_0^{2}$, where $\g_{15}$ is an absolute constant, and that $\abs{\Re(\tau_1(\cc_0))} \ll 1$, as $\mathcal{B}$ is a set of bounded real part. Therefore, we get that $\abs{\tau_1(\cc_0)} \ll D_0^2$.

For the lower bound on the imaginary part, first note that if $\tau \in \mathfrak{F}$, then $\Im(\tau)\geq \frac{\sqrt{3}}{2}$. Again, assume that $\tau_1(\cc_0)=M \cdot \tau_1'$, for some $\tau_1' \in \mathfrak{F}$ and $M=\begin{psmallmatrix}
a & b \\ 
c & d
\end{psmallmatrix}\in \mathrm{SL}_2(\Z)$. Then,
$$\Im(\tau_1(\cc_0))=\Im\pt{\dfrac{a \tau_1'+b}{c \tau_1'+d}}=\dfrac{\Im(\tau_1')}{\abs{c\tau_1'+d}^2}\geq \dfrac{\Im(\tau_1')}{(\abs{c}+\abs{d})^2 \cdot \max\pg{1, \abs{\tau_1'}}^2}\gg_{\,\!{}_{M}} \dfrac{1}{\max\pg{1, \abs{\tau_1'}}^2}.$$
From before we get that $\Im(\tau_1')\ll \max\pg{1, \log\abs{j(\tau_1')}}= \max\pg{1, \log\abs{j(\tau_1(\cc_0))}} \ll D_0^2$, which implies $\abs{\tau_1'}\ll D_0^2$, since $\tau_1' \in \mathfrak{F}$ implies $\abs{\Re(\tau_1')}\leq \frac{1}{2}$. Hence,
$$\Im(\tau_1(\cc_0))\gg_{\,\!{}_{M}} \dfrac{1}{\max\pg{1, \abs{\tau_1'}}^2}\gg \dfrac{1}{D_0^4}.$$
As before, we need to consider only finitely many choices of $M \in \mathrm{SL}_2(\Z)$, so we have that $\Im(\tau_1(\cc_0))\gg 1/{D_0^4}$, where the implied constant is absolute.

Similar arguments give the respective bounds for $\tau_2(\cc_0)$.
\end{proof}

\section{Proof of Theorem \ref{thm:main_thm_isog}}
We want to show that the set $\c'$ (defined at the start of the previous section) is finite. Since the map 
$$\pi_{\vert_{\c}}: \cc \mapsto (\l(\cc), \m(\cc))$$
is finite-to-one, Northcott’s theorem together with Lemma \ref{lemma_bounds} reduces the problem to bounding the degree $D_0$ of $\l(\cc)$ and $\m(\cc)$ over $k$.

Let $\cc_0 \in \c'$ and $\s \in \Gal(\overline{k}/k)$. Notice that $\s(\cc_0) \in \c'$. Indeed, we have
$$j\pt{E_{\l(\s(\cc_0))}}=j\pt{E_{\s(\l(\cc_0))}}=J\pt{\s(\l(\cc_0))}=\s\pt{J\pt{\l(\cc_0)}}=\s\pt{j\pt{E_{\l(\cc_0)}}}$$
and an analogous identity holds for $\m(\cc_0)$ in place of $\l(\cc_0)$. If $N_0$ is defined as in Lemma \ref{lemma_bounds}, it follows that 
$$\Phi_{N_0}\pt{j\pt{E_{\l(\s(\cc_0))}},j\pt{E_{\m(\s(\cc_0))}}}=\s\pt{\Phi_{N_0}\pt{j\pt{E_{\l(\cc_0)}},j\pt{E_{\m(\cc_0)}}}}=0,$$
so there exists a cyclic isogeny $\phi_{\s(\cc_0)}: E_{\m(\s(\cc_0))} \to E_{\l(\s(\cc_0))}$ of degree $N_0$. Since $\deg \s(\phi_{\cc_0})= \deg \phi_{\cc_0}=N_0$, we can take $\phi_{\s(\cc_0)}=\s\pt{\phi_{\cc_0}}$.

Thus, as $\c$ is defined over $k$, we also have
$$\s(\widetilde{P}_p)=\widetilde{P}_p \, , \qquad P_q(\s(\cc_0))=\s\pt{P_q(\cc_0)},$$ 
$$\phi_{\s(\cc_0)}\pt{Q_r(\s(\cc_0))}=\phi_{\s(\cc_0)}\pt{\s\pt{Q_r(\cc_0)}}=\s \pt{\phi_{\cc_0}\pt{Q_r(\cc_0)}}$$
for every $p=1, \ldots, \ell$, $q=\ell+1, \ldots, m$, $r=1, \ldots, n$.
Moreover, in case \ref{case_3}, we can assume without loss of generality that the generator $\r$ of $\en(E_{\l_0})$ is defined over $k$, so that $\s(\r)=\r$. Recall that we are using \cite[Proposition II.1.1]{Sil94} to identify endomorphisms with complex numbers. Furthermore, \cite[Proposition II.2.2]{Sil94} guarantees that under this identification the action of $\Gal(\overline{k}/k)$ on these two objects is the same.

So, in all cases, we have:
\begin{align*}
&\sum\limits_{p=1}^{\ell} (a_p +b_p \r)\widetilde{P}_p +\sum\limits_{q=\ell+1}^{m} (a_q +b_q \r)P_q(\s(\cc_0)) + \sum\limits_{r=1}^{n} (a_{m+r}+b_{m+r}\r) \phi_{\s(\cc_0)}(Q_q(\r(\cc_0)))\\
&=\s\Big(\sum\limits_{p=1}^{\ell} (a_p +b_p \r)\widetilde{P}_p +\sum\limits_{q=\ell+1}^{m} (a_q +b_q \r)P_q(\cc_0) + \sum\limits_{r=1}^{n} (a_{m+r}+b_{m+r}\r) \phi_{\cc_0}(Q_r(\cc_0))\Big)=O
\end{align*}
on $E_{\l(\s(\cc_0))}$, since the $a_i$ and $b_i$ are integers.

Now, consider the point $\f^{-1}_{\vert_{\mathcal{F}_{\mathcal{B}}}}\pt{\s(\cc_0)} \in \mathcal{Z}$ with coordinates $$\pt{\tau_1^{\s},\widetilde{z}_1, \ldots, \widetilde{z}_{\ell}, z_{\ell+1}^{\s},\ldots, z_m^{\s}, \tau_2^{\s}, w_1^{\s}, \ldots, w_n^{\s}}$$
(here the superscript $\s$ does not denote a Galois conjugate). By the previous equation and lemmas \ref{lemma_coeff} and \ref{lemma_tau} we have relations
$$\sum\limits_{p=1}^{\ell} (a_p +b_p \r)\widetilde{z}_p +\sum\limits_{q=\ell+1}^{m} (a_q +b_q \r)z_q^{\s} + (C^{\s}\tau_1^{\s}+D^{\s})\pt{\sum\limits_{r=1}^{n} (a_{m+r}+b_{m+r}\r) w_r^{\s}} \in \Z+\Z\tau_1^{\s},$$
$$\tau_2^{\s}=\dfrac{A^{\s}\tau_1^{\s}+B^{\s}}{C^{\s}\tau_1^{\s}+D^{\s}}$$
with $(a_{\ell+1}+b_{\ell+1} \r, \ldots, a_{m+n}+ b_{m+n}\r)\neq \mathbf{0}$ and
$$\max \pg{\abs{a_i}, \abs{b_i}} \leq \g_{10} D_0^{\eta_1}, \qquad \abs{\tau_1^{\s}},\abs{\tau_2^{\s}}\leq \g_{11} D_0^{2}, \qquad \Im(\tau_1^{\s})\geq \g_{12} \dfrac{1}{D_0^4} $$
and $\abs{A^{\s}},\abs{B^{\s}},\abs{C^{\s}},\abs{D^{\s}}\leq \g_{16}N_0\leq \g_{17} D_0^{3}$ by Lemma \ref{lemma_bound_isogenie_matrix} and Lemma \ref{lemma_bounds}.

So, $$\f^{-1}_{\vert_{\mathcal{F}_{\mathcal{B}}}}\pt{\s(\cc_0)} \in \mathcal{Z}\pt{\g D_0^{\eta}}$$
where $\g=\max\pg{\g_{10},\g_{11}, \frac{1}{\g_{12}}, \g_{17}}$ and $\eta=\max\pg{\eta_1, 4}$.

Finally, there are at least $\pq{k(\cc_0):k}\geq \pq{k(\l(\cc_0),\m(\cc_0)):k}=D_0$ different 
$$\pt{\tau_1^{\s},\widetilde{z}_1, \ldots, \widetilde{z}_{\ell}, z_{\ell+1}^{\s},\ldots, z_m^{\s}, \tau_2^{\s}, w_1^{\s}, \ldots, w_n^{\s}}$$ in $\mathcal{Z}\pt{\g D_0^{\eta}}$. However, applying Proposition \ref{prop:main_estim_isog} with $\eps=\frac{1}{2\eta}$ gives a contradiction if $D_0$ is large enough. This proves that $D_0$ is bounded and, consequently, Theorem \ref{thm:main_thm_isog}.

\subsection*{Acknowledgements}
We would like to thank Fabrizio Barroero and Laura Capuano for many useful discussions and comments, and Gabriel Dill for his comments, for pointing out an alternative proof of cases (ii) and (iii), and for his hospitality in Bonn. We are also grateful to Francesco Veneziano and to the anonymous referee for their many helpful comments, which greatly improved the exposition.

The author was supported by the PRIN 2022 project \emph{2022HPSNCR: Semiabelian varieties, Galois representations and related Diophantine problems} and the \emph{National Group for Algebraic and Geometric Structures, and their Applications} (GNSAGA INdAM).

\bibliographystyle{alpha}
\bibliography{Isogeny_relations_biblio}

\end{document}